\documentclass[11pt, onesided, reqno]{amsart}

\usepackage{amssymb}
\usepackage{amsmath}
\usepackage{color}
\usepackage{enumerate}
\usepackage{graphicx}

\setlength{\oddsidemargin}{-0.0in}
\setlength{\textwidth}{6.5in}
\setlength{\topmargin}{-0.0in}
\setlength{\textheight}{8.4in}
\evensidemargin\oddsidemargin

\begin{document}

\setcounter{page}{1}

\newtheorem{PROP}{Proposition}
\newtheorem{REMS}{Remark}
\newtheorem{LEM}{Lemma}
\newtheorem{THEA}{Theorem A\!\!}
\renewcommand{\theTHEA}{}
\newtheorem{THEB}{Theorem B\!\!\!}
\renewcommand{\theTHEB}{}

\newtheorem{theorem}{Theorem}
\newtheorem{proposition}[theorem]{Proposition}
\newtheorem{corollary}[theorem]{Corollary}
\newtheorem{lemma}[theorem]{Lemma}
\newtheorem{assumption}[theorem]{Assumption}

\newtheorem{definition}[theorem]{Definition}
\newtheorem{hypothesis}[theorem]{Hypothesis}

\theoremstyle{definition}
\newtheorem{example}[theorem]{Example}
\newtheorem{remark}[theorem]{Remark}
\newtheorem{question}[theorem]{Question}

\newcommand{\eqnsection}{
\renewcommand{\theequation}{\thesection.\arabic{equation}}
    \makeatletter
    \csname  @addtoreset\endcsname{equation}{section}
    \makeatother}
\eqnsection

\def\E{\mathbb{E}}
\def\Z{\mathbb{Z}}
\def\N{\mathbb{N}}
\def\Q{\mathbb{Q}}
\def\R{\mathbb{R}}
\def\P{\mathcal{P}}
\def\Pb{\mathbb{P}}
\def\C{\mathbb{C}}
\def\A{\mathcal{A}}
\def\cC{\mathcal{C}}
\def\F{\mathcal{F}}
\def\S{\mathcal{S}}
\def\W{\mathcal{W}}
\def\L{\mathcal{L}}
\def\G{\mathcal{G}}
\def\i{\text{i}}
\def\sgn{\text{sgn}}

\newcommand{\Un}{{\bf 1}}
\newcommand{\equi}{\mathop{\sim}\limits}
\def\={{\;\mathop{=}\limits^{\text{(law)}}\;}}
\def\d{{\;\mathop{=}\limits^{\text{(1.d)}}\;}}
\def\st{{\;\mathop{\geq}\limits^{\text{(st)}}\;}}

\def\cas{\stackrel{a.s.}{\longrightarrow}}
\def\claw{\stackrel{d}{\longrightarrow}}
\def\elaw{\stackrel{d}{=}}
\def\qed{\hfill$\square$}

\newcommand*\pFqskip{8mu}
\catcode`,\active
\newcommand*\pFq{\begingroup
        \catcode`\,\active
        \def ,{\mskip\pFqskip\relax}%
        \dopFq
}
\catcode`\,12
\def\dopFq#1#2#3#4#5{%
        {}_{#1}F_{#2}\biggl[\genfrac..{0pt}{}{#3}{#4};#5\biggr]%
        \endgroup
}

\newcommand*\pMqskip{8mu}
\catcode`,\active
\newcommand*\pMq{\begingroup
        \catcode`\,\active
        \def ,{\mskip\pMqskip\relax}%
        \dopMq
}
\catcode`\,12
\def\dopMq#1#2#3{%
        M\biggl[\genfrac..{0pt}{}{#1}{#2};#3\biggr]%
        \endgroup
}

\title[]{Persistence and exit times for some additive functionals of skew Bessel processes}
\author[Christophe Profeta]{Christophe Profeta}

\address{ Laboratoire de Math\'ematiques et Mod\'elisation d'Evry (LaMME),   Universit\'e d'Evry-Val-d'Essonne, UMR CNRS 8071, F-91037 Evry Cedex. {\em Email} : {\tt christophe.profeta@univ-evry.fr}}



\keywords{Bessel processes - Skew processes - Persistence problem - Exit time problem}

\subjclass[2010]{ 60J60 - 60G40 - 60G18}

\begin{abstract} Let $X$ be some homogeneous additive functional of a skew Bessel process $Y$. In this note, we compute the asymptotics of the first passage time of $X$ to some fixed level $b$, and study the position of $Y$ when $X$ exits a bounded interval $[a,b]$. As a by-product, we obtain the probability  that $X$ reaches the level $b$ before the level $a$. 
Our results extend some previous works on additive functionals of Brownian motion by Isozaki and Kotani for the persistence problem, and by Lachal for the exit time problem.
\end{abstract}
\maketitle
\section{Introduction}

\noindent
\subsection{Statement of the results}
Let $Y$ be a skew Bessel process with dimension $\delta \in [1,2)$ and skewness parameter $\eta \in (-1,1)$. $Y$ is a linear diffusion on $\R$ with scale function $s$ and speed measure $m$ given by :
$$
s(y)=\begin{cases}
 \frac{1-\eta}{2-\delta}\, y^{2-\delta} &\quad \text{for }y>0,\\
 \vspace{-.3cm}\\
- \frac{1+\eta}{2-\delta}\, |y|^{2-\delta} &\quad \text{for }y<0,\\
\end{cases}
\quad \text{ and }\quad  
m(dy)=\begin{cases}
\frac{1}{1-\eta} \,y^{\delta-1}dy &\quad \text{for }y>0,\\
 \vspace{-.3cm}\\
 \frac{1}{1+\eta} \,|y|^{\delta-1}dy &\quad \text{for }y<0.\\
\end{cases}
$$
Heuristically, $Y$ may be constructed by starting from a standard Bessel process and flipping independently each excursion to the negative half-line with probability $\frac{1-\eta}{2}$. The study of the stochastic differential equation satisfied by the skew Bessel process was undertaken by Blei \cite{Ble}, while its semi-group was computed for instance in Alili-Aylwin \cite{AlAy}. We also refer to Lejay \cite{Lej} for a nice account in the special case of skew Brownian motion, i.e when $\delta=1$.\\

\noindent
 For $\gamma>0$ and $c>0$, let us set $V_\gamma(y) =  |y|^{\gamma} \left(\Un_{\{y\geq0\}}  - c \Un_{\{y<0\}}\right)$ and consider the homogeneous additive functional 
$$X_t= \int_0^{t}V_\gamma(Y_u) du, \qquad\qquad t\geq0.$$
We denote by $\Pb_{(x,y)}$ the law of $(X,Y)$ when started from $(x,y)\in \R^2$, with the convention that $\Pb=\Pb_{(0,0)}$. The pair $(X,Y)$ is Markovian and satisfies the following scaling property : for any $k>0$, the law of $(X_{kt}, Y_{kt})_{t\geq0}$ under $\Pb_{(x,y)}$ is the same as that of $(k^{1+\frac{\gamma}{2}}X_{t}, k^{\frac{1}{2}}Y_{t})_{t\geq0}$ under $\Pb_{(x k^{-1-\frac{\gamma}{2}},y k^{-\frac{1}{2}})}$.
Define, for $b\in \R$, the stopping time
$$T_b = \inf\{t>0,\, X_t= b\}. $$
We start by computing the asymptotics of the survival function $\Pb_{(x,y)}(T_b> t)$ as $t\rightarrow+\infty$. Such studies are known as persistence problems, and have received a lot of interest recently. We refer to the survey \cite{AuSi} for a review of the mathematical literature (as well as several conjectures), and to \cite{BMS} for a more physics point of view and many applications.
\\

In the case of the integrated Brownian motion (i.e. $\{\delta=\gamma=c=1 \text{ and }\eta=0 \}$),  the rate of decay was computed by Groeneboom, Jongbloed  and Wellner \cite{GJW} using the explicit density of the pair $(T_b, Y_{T_b})$ obtained by Lachal \cite{LacT0}. We also refer to \cite{LacTn} for the study of the $n^{\text{th}}$ passage time of the integrated Brownian motion, and to \cite{Pro} for its asymptotics and some application to penalizations. The case of an additive functional of Brownian motion  (i.e. $\{\delta=1,\, \eta=0, \gamma>0  \text{ and } c>0 \}$) was then solved  by Isozaki and Kotani \cite{IsKo} using excursion theory and a Tauberian theorem.\\

We shall first extend these results to the case of skew Bessel processes. 
To this end, let us set 
$$\nu=\frac{2-\delta}{2+\gamma} \in \left(0,\, \frac{1}{2}\right)$$
and
$$\theta=  \frac{2+\gamma}{2\pi } \text{Arctan} \left( \frac{\sin\left(\nu\pi \right)}{  c^{\nu} \frac{1-\eta}{1+\eta} + \cos\left(\nu\pi \right) }\right)  \in \left(0, 1-\frac{\delta}{2}\right).$$
Define next the first hitting time of 0 by $Y$ :
$$ \sigma_0 = \inf\{t\geq0, \,Y_t=0\} $$
and consider the harmonic  function $h$ defined on $\R\times\R$ by 
\begin{equation}\label{eq:defh}
h(x,y) = 
\left\{
\begin{array}{cl}
\E_{(0,y)}\left[\left(x-X_{\sigma_0}\right)_+^{\frac{2\theta}{2+\gamma}}\right]& \text{ if $x\geq0$, }\\
0 &  \text{ if $x<0$ }
\end{array}\right.
\end{equation}
where $a_+ = \max(a,0)$.
The function $h$ is increasing in $x$, decreasing in $y$, and satisfies for $x>0$ the scaling property :
\begin{equation}\label{eq:hscale}
 h(x,y) = x^{\frac{2\theta}{2+\gamma}} h\left(1, y x^{-\frac{1}{2+\gamma}} \right).
 \end{equation}


\noindent
Our first result is the following asymptotics. 
\begin{theorem}\label{theo:A} 
Assume that  $\{x<b \text{ and }y\in \R\}$ or $\{x=b \text{ and }y<0\}$. Then there exists a constant $\kappa>0$ independent from $(x,y)$ such that  
$$ \Pb_{(x,y)}\left( T_b> t  \right)\sim  \kappa \, h(b-x,y)\, t^{-\theta},\qquad\qquad t\rightarrow +\infty.
$$
\end{theorem}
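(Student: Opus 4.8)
The plan is to reduce the persistence asymptotics to an analysis of the process observed at the successive zeros of $Y$, and then to apply a Tauberian argument, following the strategy of Isozaki--Kotani but with the skew Bessel ingredients. First I would use the scaling property of $(X,Y)$ recorded in the introduction to reduce to a convenient normalization, and split the event $\{T_b>t\}$ according to the excursions of $Y$ away from $0$. Because $Y$ has dimension $\delta\in[1,2)$, the point $0$ is reached and is regular, so one may use It\^o's excursion theory: during an excursion of $Y$ of length $\ell$ taking values on the positive (resp. negative) side, the increment of $X$ is of the order $\ell^{1+\gamma/2}$ (resp. $-c\,\ell^{1+\gamma/2}$), and the scaling of the excursion measure together with the definition of $\nu$ shows that the increments of $X$ sampled at the zeros of $Y$ behave, after a time change, like the increments of a stable process. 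The asymmetry between positive and negative excursions, weighted by the skewness $\eta$ through the speed measure, is exactly what produces the positivity parameter encoded in $\theta$; I would make this precise by computing the Laplace/Fourier exponent of the relevant subordinated process and identifying it with that of a stable L\'evy process of index $\nu$ and a spectral asymmetry matching the $\mathrm{Arctan}$ expression defining $\theta$.

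Next I would invoke the Rogozin--Doney type theory for the persistence (one-sided exit) of L\'evy processes: for a stable process with positivity parameter $\rho$, the probability of staying below a level up to time $t$ decays like $t^{-(1-\rho)}$, and more precisely there is a renewal/harmonic function governing the constant. Translating back through the time change (the clock being the additive functional of $Y$ restricted to excursions, whose inverse is a stable subordinator), the exponent $1-\rho$ becomes $\theta$, which explains the range $\theta\in(0,1-\delta/2)$. The spatial dependence $h(b-x,y)$ should emerge as the renewal function of the descending ladder process evaluated at the starting configuration; the identification of this renewal function with the explicit expectation in \eqref{eq:defh} is where the harmonicity of $h$ (stated in the excerpt) and its scaling property \eqref{eq:hscale} are used — $h$ is, up to a constant, the unique (up to scaling) nonnegative function, vanishing for $x<0$, that is harmonic for the killed process and has the right homogeneity. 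I would verify that $X_{\sigma_0}$ has a finite moment of order $2\theta/(2+\gamma)<1$, which is what makes $h$ well-defined and finite, using the tail behaviour of $\sigma_0$ for the Bessel process (polynomial of index $\delta/2$) combined with $\gamma>0$.

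Finally, to pass from the "discrete skeleton at the zeros of $Y$" to the continuous-time statement, I would apply a Tauberian theorem (Karamata) to the Laplace transform in $t$ of $\Pb_{(x,y)}(T_b>t)$: the excursion-theoretic computation yields an asymptotic equivalent $\sim C\, h(b-x,y)\,\lambda^{\theta}$ for the Laplace transform as $\lambda\to0^+$ (here monotonicity of $t\mapsto\Pb_{(x,y)}(T_b>t)$ legitimizes the Tauberian inversion), and this gives the $t^{-\theta}$ decay with constant $\kappa=C/\Gamma(1-\theta)$ or similar; the case $\{x=b,\ y<0\}$ is handled because then $X$ must first return to the region where $Y$ is positive before it can exceed $b$ again, and $h(0,y)=0$ consistently reflects that $T_b$ is not immediate only through the excursion structure. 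The main obstacle, I expect, is the rigorous identification of the subordinated jump structure of $X$ at the zeros of $Y$ with an honest stable process — i.e. controlling the error terms coming from the last, incomplete excursion and from the fact that $X$ moves continuously (not by jumps) within each excursion — and, relatedly, proving that the harmonic function arising abstractly from the ladder-height renewal measure coincides with the concrete expression \eqref{eq:defh}; this requires a uniqueness statement for scaling-covariant harmonic functions of the killed Markov process $(X,Y)$.
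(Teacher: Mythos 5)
Your outline is essentially the Isozaki--Kotani route (excursion theory for $(X,Y)$, identification of an induced stable process at the zeros of $Y$, fluctuation theory for that process, then Karamata inversion), and this is genuinely different from what the paper does: the paper never constructs the induced stable process. It works instead with the stopping time $\zeta_b$, derives the Mellin transform of $X_{\zeta_b}-b$ from a scaling/Markov identity (Proposition \ref{prop:Xzeta}), obtains two-sided crude bounds $\kappa_1 t^{-\theta}\le\Pb(\zeta_b\ge t)\le\kappa_2 t^{-\theta}$ (Lemma \ref{lem:zetasup}), and then gets the exact asymptotics for $T_b$ by proving that $(x,y)\mapsto h(-x,y)$ is harmonic for the killed process, performing a Doob $h$-transform, and concluding by monotone convergence --- there is no Tauberian theorem in the final step. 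Your route is viable in principle (it is how the Brownian case was first treated), but as written it has two genuine gaps.

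First, the decisive quantitative input --- the spectral asymmetry of the stable process obtained by sampling $X$ at the inverse local time of $Y$ at $0$, equivalently the ratio of the positive and negative tails of the excursion area under the It\^o measure of the \emph{skew Bessel} process --- is asserted to ``match the arctangent expression defining $\theta$'' but is not computed, and it is not routine. This is precisely the content of Lemma \ref{lem:AA} (the ratio $M_-/M_+$), which is the most technical part of the paper: a complex-parameter Feynman--Kac identity solved in terms of $I_\nu$ and $K_\nu$, justified by an It\^o--Tanaka martingale argument for $Z=\sgn(Y)|Y|^{2-\delta}$ (this is also where the restriction $\delta\ge 1$ enters). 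Without this computation the value of $\theta$, which is the main content of the theorem, is not established. Second, the passage from the skeleton at the zeros of $Y$ to $T_b$ itself with the exact prefactor $h(b-x,y)$ is more than an ``error term from the last incomplete excursion'': $T_b$ and $\zeta_b$ share the decay exponent but a priori not the constant, and identifying the constant as $\kappa\, h(b-x,y)$ requires either the joint law of the crossing inside an excursion or, as in the paper, the harmonicity of $h$ (read off from the residue at $s=\theta$ of the Mellin transform of $Y_{T_0}$ in \eqref{eq:MelYT}) combined with the upper bound of Lemma \ref{lem:zetasup} to guarantee finiteness of the limiting constant. You correctly flag both points as the main obstacles, but they are where essentially all of the work lies.
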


One may observe that the influences of the parameters $c$ and $\eta$ on the persistence exponent $\theta$ are similar. This is due to the fact that they play similar roles in the expression of $X$, i.e. they put different weights on the positive and negative excursions of $Y$. \\
 
\noindent The power decay of Theorem \ref{theo:A} is typical of self-similar processes, but computing the value of $\theta$ is generally a difficult task. Other examples for which $\theta$ is explicitly known are for instance the fractional Brownian motion (\cite[Section 3.3]{AuSi}), stable L\'evy processes (\cite[Section 2.2]{AuSi}) as well as their integrals \cite{PSPers}. Conversely, it still remains an open problem for instance to find the value of the persistence exponent for the integrated fractional Brownian motion, or for the twice integrated Brownian motion.\\

\noindent
Letting formally $c\rightarrow 0$ in Theorem \ref{theo:A}, we obtain $\theta= 1-\frac{\delta}{2}$, and by scaling, we may infer that
$$ \Pb\left( \int_0^{1} (Y_s)_+^{\gamma}\, ds <\varepsilon \right)  = \Pb\left( \int_0^{\varepsilon^{-\frac{2}{2+\gamma}}} \!(Y_s)_+^\gamma\, ds <1 \right) =  \Pb\left(T_1>\varepsilon^{-\frac{2}{2+\gamma}}\right)\equi_{\varepsilon\rightarrow 0}  \kappa\, \varepsilon^{\nu}.$$
In the standard Brownian case (when $\delta=\gamma=1$ and $\eta=0$), this exponent agrees with the conjecture in Janson \cite[eq. (261)]{JanArea}.\\

\noindent
The general idea of the proof of Theorem \ref{theo:A} is to work with the first zero of $Y$ after $X$ has crossed the level $b>0$ :
\begin{equation}\label{eq:defzetab}
  \zeta_b = \inf\{t>0,\, X_t\geq b \text{ and } Y_t= 0\}.
\end{equation}
This stopping time turns out to be easier to study than $T_{b}$, and yields far more simpler expressions. We shall then retrieve information on $T_b$ by applying the Markov property, see Section \ref{sec:2}. \\

We now turn our attention to the study of the exit time of $X$ from the interval $[a,b]$ with $a<0<b$~:
$$T_{ab}= \inf\{t>0, \, X_t\notin (a,b)\}.$$
In the Brownian case,   i.e. when $\{\delta=1, \,\eta=0,\, \gamma>0 \text{ and }c>0 \}$, the distribution of $Y_{T_{ab}}$ was computed by Lachal \cite{LacExit1, LacExit2} using excursion theory for the bivariate process $(X,Y)$.  Following his notation,  but changing the signs to keep positive parameters, we set 
$$A = \frac{(2+\gamma)^2}{2},$$
$$ \alpha = \frac{1}{\pi}\text{Arctan}\left(\frac{ \sin\left(\nu \pi\right)}{c^{-\nu}\frac{1+\eta}{1-\eta}+ \cos\left(\nu \pi\right)  }\right)\qquad \text{ and } \qquad\beta = \frac{2}{2+\gamma} \theta.$$
Notice that with these definitions
$$\alpha,\beta \in (0, \nu)\quad \text{ and }\quad \alpha+\beta=\nu.$$ In the following, we shall denote  by $I_\nu$ the modified Bessel function of the first kind and we recall the definition of the hypergeometric functions ${}_1F_1$  and ${}_2F_1$ (see for instance \cite[Chapters 9.1 and 9.2]{GrRy}): 
$$\pFq{1}{1}{a}{c}{z} = \sum_{n=0}^{+\infty}  \frac{(a)_n}{(c)_n} \frac{z^n}{n!}\qquad \text{ and }\qquad \pFq{2}{1}{a\quad b}{c}{z} = \sum_{n=0}^{+\infty}  \frac{(a)_n(b)_n}{(c)_n} \frac{z^n}{n!}
$$
where $(a)_n = a(a+1)\ldots (a+n-1)$ for $n\in \N$.


\begin{theorem}\label{theo:B} Assume that $a<x<b$. The probability density function of $Y_{T_{ab}}$ admits the expression~:
$$\Pb_{(x,0)}(Y_{T_{ab}}\in dz)/dz=  \left\{
\begin{array}{lr}
\displaystyle D_+
\left(\frac{x-a}{b-a}\right)^{\alpha} \frac{z^{\gamma+\alpha(\gamma+2)+\delta-1}}{(b-x)^{1-\beta}} \exp\left(-\frac{ z^{\gamma+2}}{A (b-x)}\right)&\\
\vspace{-.3cm}\\
\displaystyle\hspace{2cm} \times \pFq{1}{1}{\alpha}{ 1+\alpha}{\frac{ (x-a)z^{\gamma+2}}{A(b-x)(b-a)}} &  \text{if }z>0, \\
\vspace{-.1cm}\\
 \displaystyle D_-
\left(\frac{b-x}{b-a}\right)^{\beta} \frac{|z|^{\gamma+\beta(\gamma+2)+\delta-1}}{(x-a)^{1-\alpha}} \exp\left(-\frac{ c|z|^{\gamma+2}}{A (x-a)}\right)&\\
\vspace{-.3cm}\\
\displaystyle\hspace{2cm}\times \pFq{1}{1}{\beta}{ 1+\beta}{\frac{c (b-x)|z|^{\gamma+2}}{A(x-a)(b-a)}} &  \text{if }z<0,
\end{array}\right.
$$
where the constants $D_+$ and $D_-$ are given by
$$D_+ =  \frac{\Gamma(\nu)}{ A^{1-\beta}}\frac{\sin(\pi \beta)}{\pi \nu} \frac{2-\delta}{\Gamma(1+\alpha)}\quad \text{ and }\quad D_-=\Gamma(\nu) \left(\frac{c}{A}\right)^{1-\alpha}\frac{\sin(\pi \alpha)}{\pi \nu } \frac{2-\delta}{\Gamma(1+\beta)}.$$
\end{theorem}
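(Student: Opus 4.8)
The plan is to use excursion theory for the bivariate Markov process $(X,Y)$ exactly as Lachal did in the Brownian case, exploiting the self-similarity noted in the introduction together with the machinery developed for Theorem~\ref{theo:A}. First I would identify the local time of $Y$ at $0$ and the associated It\^o excursion measure $\mathfrak{n}$; since $Y$ started at $0$ spends zero time at $0$, the trajectory of $(X,Y)$ decomposes into excursions away from the line $\{y=0\}$, each carrying an increment of $X$ of sign $+1$ (positive excursions of $Y$) or $-c$ (negative excursions). The exit position $Y_{T_{ab}}$ is realized \emph{during} the excursion straddling $T_{ab}$, so by the master formula for excursions I would express $\Pb_{(x,0)}(Y_{T_{ab}}\in dz,\ X_{T_{ab}}=b)$ (resp. $=a$) as an integral over the local-time clock of $\mathfrak{n}$-expectations of the form $\mathfrak{n}\big(Y_s\in dz,\ \chi_s\in d\xi,\ s<\text{(excursion length)}\big)$ weighted against the law of $X$ evaluated at the foot of the straddling excursion, where $\chi$ is the additive functional accumulated along the excursion. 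The key inputs are: (i) the law of $X$ at excursion feet, which is governed by a stable-type subordinator whose two-sided jumps encode the weights $+1$ and $-c$ and whose index is $\nu=\frac{2-\delta}{2+\gamma}$, the positivity parameter being precisely $\alpha$ (resp. $\beta$), so that the relevant one-sided ladder structure produces the $\mathrm{Beta}$-type factors $\big(\frac{x-a}{b-a}\big)^{\alpha}$ and $\big(\frac{b-x}{b-a}\big)^{\beta}$; and (ii) the entrance law of $(X,Y)$ under $\mathfrak{n}$ restricted to, say, positive excursions, which by the scaling property and the explicit skew-Bessel semigroup is computable in closed form and yields the Gamma/exponential/${}_1F_1$ block.

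Concretely, I would first treat the \emph{one-sided} problem: compute $\Pb_{(x,0)}(Y_{T_b}\in dz)$ when there is no lower barrier (equivalently $a\to-\infty$), using the stopping time $\zeta_b$ from \eqref{eq:defzetab} and the strong Markov property at $\zeta_b$ together with the known harmonic function $h$ and the constant $\kappa$ from Theorem~\ref{theo:A}. At $\zeta_b$ the process $Y$ is at $0$ and $X$ has overshot $b$ by some amount $X_{\zeta_b}-b\geq0$ whose law is that of the overshoot of the driving $\nu$-stable subordinator; from there $Y$ performs one more (positive or negative) excursion during which $X$ must climb from a value $\leq b-\text{(something)}$ up past $b$ or, in the two-sided case, down past $a$. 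Carrying out the $\mathfrak{n}$-integral against the explicit skew-Bessel excursion entrance law — which for a Bessel$(\delta)$ excursion has the well-known density proportional to $z^{\delta-1}\ell^{-1-1/(2-\delta)}\exp(-z^{2-\delta}/(c_\delta \ell))$ in (space, length) — after the time-change $u\mapsto \int_0^u V_\gamma(Y)$ that turns excursion length into the increment of $X$, produces an integrand of the shape $z^{\gamma+\delta-1}(b-x)^{-1}\exp(-z^{\gamma+2}/(A(b-x)))$; integrating the local-time variable over $[0,\infty)$ against the $\mathrm{Beta}(\alpha,1-\alpha)$-type density of the position of the straddling excursion's foot then inserts the confluent hypergeometric ${}_1F_1(\alpha;1+\alpha;\cdot)$ and fixes the exponent $\gamma+\alpha(\gamma+2)+\delta-1$ on $z$. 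The normalizing constants $D_\pm$ are then pinned down either by matching with the total mass $\Pb_{(x,0)}(X_{T_{ab}}=b)$ (which must itself equal the ruin probability $\big(\frac{x-a}{b-a}\big)^{\alpha}$ up to the $_1F_1$ correction, recoverable by integrating $z$ out using a standard ${}_1F_1$ integral) or by the consistency requirement that both pieces integrate to $1$.

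I expect the main obstacle to be item (i) above: rigorously identifying the process $X$ at the successive excursion feet (and in particular its overshoot over $b$ and undershoot below $a$) as a \emph{two-sided} stable subordinator-like object with the correct index $\nu$ and asymmetry parameters $\alpha,\beta$, and then extracting the exact hitting/overshoot densities of a bounded interval for that process. This is where the ratios $\big(\frac{x-a}{b-a}\big)^{\alpha}$, $\big(\frac{b-x}{b-a}\big)^{\beta}$ must emerge with the right powers, and where the interplay between $c$, $\eta$, and $\nu$ — already visible in the definitions of $\alpha$, $\beta$, $\theta$ via those Arctan formulas — has to be tracked carefully; a sign error or a mismatch between "excursion length" and "increment of $X$" would corrupt every exponent. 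A secondary, purely computational hurdle is the evaluation of the $\mathfrak{n}$-integrals against the skew-Bessel excursion entrance law: these are Laplace-type integrals of the form $\int_0^\infty \ell^{-1-\rho}\exp(-p/\ell - q\,\ell)\,d\ell$ which produce Bessel functions $I_\nu$ in intermediate steps and must be simplified — most likely via the integral representation ${}_1F_1(\alpha;1+\alpha;w)=\alpha\int_0^1 e^{wt}t^{\alpha-1}\,dt$ — down to the clean confluent hypergeometric form displayed in the statement. Once these two points are settled, verifying the constants $D_\pm$ and checking the $c\to0$ and Brownian ($\delta=1,\eta=0$) degenerations against Lachal's formulas provides the final consistency check.
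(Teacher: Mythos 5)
Your outline follows Lachal's excursion-theoretic route, which is genuinely different from what the paper does: the paper never invokes the It\^o excursion measure or a master formula, but instead applies the strong Markov property at $\zeta_{ab}$ and the scaling property to fractional moments $\int_0^{\infty}\E_{(x,0)}[(X_t-b)_+^{-r}\Un_{\{Y_t\leq0\}}]\,dt$, obtaining a closed system of Mellin-type integral equations (\ref{eq:sys}) for the overshoot/undershoot laws of $X_{\zeta_{ab}}$, which it then solves explicitly and shows to have a unique solution (Lemma \ref{lem:lawXzetaab}). The passage from $X_{\zeta_{ab}}$ to $Y_{T_{ab}}$ is then a one-line Markov-property identity, $\Pb_{(x,0)}(X_{\zeta_{ab}}-b\in dz)=\E_{(x,0)}[\Pb_{(0,Y_{T_{ab}})}(X_{\sigma_0}\in dz)\Un_{\{Y_{T_{ab}}>0\}}]$, read as a Laplace transform in $Y_{T_{ab}}^{2+\gamma}$ and inverted by a table formula; this is cleaner than integrating the straddling excursion against the skew-Bessel entrance law, and is where your ``modified Laplace transform'' intuition is essentially right. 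What the paper's route buys is that it bypasses the exit system for $(X,Y)$ entirely and reuses the machinery already built for Theorem \ref{theo:A}.

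The genuine gap in your proposal is item (i), which you identify as the main obstacle but then resolve by assertion: you declare that the process $X$ observed at the zeros of $Y$ is a two-sided $\nu$-stable object whose ``positivity parameter is precisely $\alpha$ (resp.\ $\beta$)''. That identification is exactly the hard quantitative content of the theorem --- all of the dependence on $c$ and $\eta$, i.e.\ the Arctan formulas defining $\alpha$ and $\beta$, lives there, and nothing in your sketch derives it. To make it rigorous you would need (a) the tail asymptotics of $\int_0^{\sigma}|Y_u|^{\gamma}du$ under the positive and negative parts of the skew-Bessel excursion measure, so as to identify the ratio of the positive and negative L\'evy tails of the induced stable process (and hence its positivity parameter via the classical $\arctan$ formula), and (b) the Blumenthal--Getoor--Ray/Rogozin two-sided exit densities for that asymmetric stable process to produce the factors $\left(\frac{x-a}{b-a}\right)^{\alpha}$ and $\left(\frac{b-x}{b-a}\right)^{\beta}$. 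The paper replaces (a) by the computation of $M_-/M_+$ in Lemma \ref{lem:AA} via a complex Feynman--Kac martingale built from Bessel functions (Section \ref{sec:MM}), and replaces (b) by solving the system (\ref{eq:sys}) directly with a contraction-type uniqueness argument. Without an actual derivation of the asymmetry parameter, your argument cannot pin down the exponents $\alpha$ and $\beta$ and hence cannot produce the stated density. (Two smaller points: the harmonic function $h$ and the constant $\kappa$ from Theorem \ref{theo:A} play no role in the exit problem, and the Bessel excursion entrance law you quote has the wrong exponent in the exponential --- the power $z^{2+\gamma}$ appearing in the statement comes from the law of $X_{\sigma_0}$ in Lemma \ref{lem:Xs0}, not from $z^{2-\delta}$.)
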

Observe that the expressions we obtain are almost the same as those in \cite{LacExit1}, except for the occurrence of the parameter $\delta$. The general idea to prove Theorem \ref{theo:B} is similar to that of Theorem \ref{theo:A}. We shall first compute the law of $X_{\zeta_{ab}}$ where
\begin{equation}\label{eq:defzetaab}
\zeta_{ab}=\inf\{t>0,\, X_t\notin (a,b) \text{ and }Y_t=0\}
\end{equation}
and then retrieve the distribution of $Y_{T_{ab}}$ via a modified Laplace transform, see Section \ref{sec:3}. More generally, Theorem \ref{theo:B}  and the Markov property allow to express the law of $Y_{T_{ab}}$ also in the case $y\neq0$. 
\begin{theorem}\label{theo:C} Assume that $y\neq0$. The probability density function of $Y_{T_{ab}}$ is given as follows.
\begin{enumerate}
\item If $y>0$ and $a\leq x<b$ then :
\begin{multline*}
\Pb_{(x,y)}\left(Y_{T_{ab}}\in dz\right) = \frac{y^{2-\delta}}{A^\nu \Gamma(\nu)}  \int_0^{b-x} \Pb_{(x+u,0)}\left(Y_{T_{ab}}\in dz\right)u^{-\nu-1} e^{-\frac{y^{2+\gamma}}{Au}}du \\
+ \frac{2+\gamma}{A(b-x)} y^{1-\frac{\delta}{2}}z^{\gamma+\frac{\delta}{2}} e^{-\frac{z^{2+\gamma}+ y^{2+\gamma}}{A(b-x)}} I_\nu\left(2\frac{(zy)^{1+\frac{\gamma}{2}}}{A(b-x)}\right)\Un_{\{z>0\}} \,dz.  
\end{multline*}
\item If $y<0$ and $a< x\leq b$ then :
\begin{multline*}
\Pb_{(x,y)}\left(Y_{T_{ab}}\in dz\right) =\frac{|y|^{2-\delta}}{\Gamma(\nu)} \left(\frac{c}{A}\right)^{\nu} \int_0^{x-a} \Pb_{(x-u,0)}\left(Y_{T_{ab}}\in dz\right)u^{-\nu-1} e^{-\frac{c|y|^{2+\gamma}}{Au}}du\\
+ \frac{c(2+\gamma)}{A(x-a)} |y|^{1-\frac{\delta}{2}}|z|^{\gamma+\frac{\delta}{2}} e^{-c\frac{|z|^{2+\gamma}+ |y|^{2+\gamma}}{A(x-a)}} I_\nu\left(2\frac{c|zy|^{1+\frac{\gamma}{2}}}{A(x-a)}\right) \Un_{\{z<0\}}\, dz.
\end{multline*}
\end{enumerate}

\end{theorem}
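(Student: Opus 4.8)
The plan is to deduce Theorem~\ref{theo:C} from Theorem~\ref{theo:B} by conditioning on the behaviour of $(X,Y)$ up to the stopping time $\zeta_b$ (resp. $\zeta_a$) defined in \eqref{eq:defzetab}, i.e. the first time the additive functional leaves the interval while $Y$ sits at $0$. Suppose first $y>0$ and $a\le x<b$. Starting from $(x,y)$ the process $Y$ is positive, so on $\{t<\sigma_0\}$ we have $X_t=x+\int_0^tV_\gamma(Y_u)\,du$ strictly increasing; thus two scenarios can occur before $Y$ hits $0$: either $X$ reaches $b$ while $Y>0$ (in which case $T_{ab}=T_b$ happens during this first positive sojourn, and $Y_{T_{ab}}>0$), or $X_{\sigma_0}<b$, and then the strong Markov property at $\sigma_0$ restarts the pair at $(X_{\sigma_0},0)$ with $X_{\sigma_0}\in[x,b)$, to which Theorem~\ref{theo:B} applies directly. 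This gives the decomposition
\begin{multline*}
\Pb_{(x,y)}(Y_{T_{ab}}\in dz)= \Pb_{(x,y)}\big(Y_{T_b}\in dz,\ T_b<\sigma_0\big)\\
+\int_{[x,b)}\Pb_{(x,y)}\big(X_{\sigma_0}\in du,\ X_{\sigma_0}<b\big)\,\Pb_{(u,0)}(Y_{T_{ab}}\in dz).
\end{multline*}
The symmetric statement for $y<0$ is obtained in the same way using $\zeta_a$ and the negative excursion, which is why $c$ enters through the speed/scale weighting on $\R_-$.

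The two remaining ingredients are therefore: (i) the joint law of $(X_{\sigma_0},Y_{T_b})$ on the event that $X$ hits $b$ before $Y$ returns to $0$, and (ii) the sub-probability law of $X_{\sigma_0}$ on $\{X_{\sigma_0}<b\}$. Both are governed entirely by the positive sojourn of $Y$ away from $0$, hence by a genuine (non-skew) Bessel process of dimension $\delta$ run until it hits $0$, time-changed through the additive functional $\int_0^t Y_u^\gamma\,du$. For (ii): writing $X_{\sigma_0}=x+\int_0^{\sigma_0}Y_u^\gamma du$, the law of $\int_0^{\sigma_0}Y_u^\gamma du$ started from $y>0$ is a classical Bessel computation — by the scaling property it is a constant multiple of $y^{2+\gamma}$ times a stable-type variable of index $\nu=\tfrac{2-\delta}{2+\gamma}$, and its density is exactly $\frac{y^{2-\delta}}{A^\nu\Gamma(\nu)}\,u^{-\nu-1}e^{-y^{2+\gamma}/(Au)}$ on $(0,\infty)$; restricting the integral in the decomposition to $u\in(0,b-x)$ (i.e. $X_{\sigma_0}=x+u<b$) produces precisely the first term in part~(1). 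For (i): on $\{T_b<\sigma_0\}$ the value $Y_{T_b}$ is the position of the positive Bessel process at the instant its additive functional $\int_0^\cdot Y_u^\gamma du$ first reaches $b-x$; the joint density of (elapsed functional, current position) for a $\delta$-dimensional Bessel process is a standard object whose closed form involves the modified Bessel function $I_\nu$ — carrying out the time-change and specialising to the first-passage level $b-x$ yields the explicit $I_\nu$-term in part~(1). The passage $y<0$ is identical after replacing $Y_u^\gamma$ by $c|Y_u|^\gamma$, $\delta$-Bessel scaling by the same with the factor $c$, and $(b-x)$ by $(x-a)$.

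The main obstacle I expect is ingredient (i): pinning down the explicit density of $(\int_0^t Y_u^\gamma du,\ Y_t)$ for the Bessel process and then performing the first-passage time-change cleanly enough to recover the stated $I_\nu$ expression with the correct constants. The natural route is to start from the Bessel semigroup (which itself involves $I_{\delta/2-1}$), apply the deterministic time substitution $s\mapsto \int_0^s Y_u^\gamma du$ — equivalently, use the additive-functional scaling $(X_{kt},Y_{kt})\stackrel{d}{=}(k^{1+\gamma/2}X_t,k^{1/2}Y_t)$ stated in the introduction to reduce to a one-parameter family — and then invert a Laplace transform in the functional variable; alternatively one can recognise the answer as a known Bessel-clock identity and verify it by checking it solves the appropriate PDE with the right boundary behaviour at $Y=0$. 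Once (i) and (ii) are in hand, assembling the statement is just substitution into the displayed decomposition, bookkeeping of the normalising constants $A^\nu$, $\Gamma(\nu)$, and matching the indicator $\Un_{\{z>0\}}$ (resp. $\Un_{\{z<0\}}$) that reflects the sign of $Y_{T_b}$ during the first sojourn.
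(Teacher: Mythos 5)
Your decomposition at $\sigma_0$ is exactly the one the paper uses: on $\{Y>0\}$ the functional $X$ is strictly increasing, so either $T_{ab}=T_b$ occurs before $\sigma_0$, or one restarts at $(X_{\sigma_0},0)$ with $X_{\sigma_0}<b$ and applies Theorem \ref{theo:B}. Your treatment of the restart term is also the paper's: the density of $X_{\sigma_0}$ from Lemma \ref{lem:Xs0} (your ``stable-type variable of index $\nu$'') restricted to $\{X_{\sigma_0}<b\}$, composed with the law from Theorem \ref{theo:B}, gives the integral term verbatim. So the architecture is sound and identical to the paper's.

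The gap is in your ingredient (i), the $I_\nu$ term, which you explicitly flag as the main obstacle and do not carry out: you assert that the joint density of $\bigl(\int_0^{\cdot}Y_u^\gamma\,du,\,Y\bigr)$ for the killed Bessel process, followed by a first-passage time-change, ``yields the explicit $I_\nu$-term,'' but no derivation or constant-matching is given. That route is in principle viable (after time-changing by the inverse of the additive functional, $Y^{1+\gamma/2}$ becomes, up to the constant $A$, a Bessel process of index $\nu$ killed at $0$, whose transition density is the classical $I_\nu$ kernel), but it requires establishing and normalising that time-change identity, which is a nontrivial piece of work you have not done. The paper avoids this entirely: it writes
$$\E_{(x,y)}\bigl[Y_{T_b}^{s(2+\gamma)}\Un_{\{T_b\le\sigma_0\}}\bigr]=\E_{(x,y)}\bigl[Y_{T_b}^{s(2+\gamma)}\bigr]-\E_{(x,y)}\Bigl[\E_{(X_{\sigma_0},0)}\bigl[Y_{T_b}^{s(2+\gamma)}\bigr]\Un_{\{X_{\sigma_0}<b\}}\Bigr],$$
and then the already-proved identities \eqref{eq:Xzeta} and \eqref{eq:MelYT} of Proposition \ref{prop:Xzeta} collapse this difference to $A^{s}\,\Gamma(\nu)/\Gamma(\nu-s)\cdot\E_{(x,y)}[(X_{\sigma_0}-b)_+^{s}]$, which is computed from Lemma \ref{lem:Xs0} and inverted via a confluent-hypergeometric table formula to produce the $I_\nu$ density with the stated constants. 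To complete your proof you would either need to supply the Bessel-clock computation in full, or notice, as the paper does, that the needed Mellin transform is already a consequence of Proposition \ref{prop:Xzeta}.
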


\noindent
As a consequence of Theorems \ref{theo:B} and \ref{theo:C}, we may obtain the probability that $X$ reaches one level before the other one.

\begin{corollary}\label{cor:D}
The probability that $X$ hits the level $b$ before the level $a$ is given as follows.
\begin{enumerate}
\item If $a<x<b$ and $y=0$ :
$$\Pb_{(x,0)}\left(T_b<T_a\right)=\frac{\Gamma\left(\nu\right)}{\Gamma(1+\alpha)\Gamma\left(\beta\right)} \left(\frac{x-a}{b-a}\right)^{\alpha} 
 \pFq{2}{1}{\alpha, \;1-\beta}{1+\alpha}{\frac{x-a}{b-a}}.
$$
\item If $a<x\leq b$ and $y<0$ :
$$\Pb_{(x,y)}\left(T_b<T_a\right)=\frac{|y|^{2-\delta}}{\Gamma(\nu)} \left(\frac{c}{A}\right)^{\nu} \int_0^{x-a} \Pb_{(x-u,0)}\left(T_b<T_a\right)u^{-\nu-1} e^{-\frac{c|y|^{2+\gamma}}{Au}}du. $$
\item If $a\leq x<b$ and $y>0$ :
\begin{multline*}
 \Pb_{(x,y)}\left(T_b<T_a\right)=\frac{y^{2-\delta}}{A^\nu \Gamma(\nu)}   \left(\int_0^{b-x}\!\! \Pb_{(x+u,0)}\left(T_b<T_a\right)u^{-\nu-1}e^{-\frac{y^{2+\gamma}}{Au}}du+
\int_{b-x}^{+\infty} u^{-\nu-1}  e^{-\frac{y^{2+\gamma}}{Au}} du \right).
\end{multline*}
\end{enumerate}

\end{corollary}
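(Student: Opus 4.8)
The starting point is the identity $\{T_b<T_a\}=\{Y_{T_{ab}}>0\}$, valid up to a $\Pb_{(x,y)}$-null set. Indeed $t\mapsto X_t$ is of class $C^1$ with $X'_t=V_\gamma(Y_t)$, so on $\{Y_{T_{ab}}>0\}$ the path $X$ is strictly increasing on a left-neighbourhood of $T_{ab}$; since $X_t\in(a,b)$ for $t<T_{ab}$ and $X_{T_{ab}}\in\{a,b\}$, the endpoint value $X_{T_{ab}}$ is approached from below and must equal $b$, i.e. $T_b=T_{ab}<T_a$; symmetrically $\{Y_{T_{ab}}<0\}\subset\{T_a<T_b\}$. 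Since by Theorems \ref{theo:B} and \ref{theo:C} the law of $Y_{T_{ab}}$ is absolutely continuous, $\Pb_{(x,y)}(Y_{T_{ab}}=0)=0$, and hence
$$\Pb_{(x,y)}(T_b<T_a)=\int_0^{+\infty}\Pb_{(x,y)}(Y_{T_{ab}}\in dz).$$
The plan is to obtain each of the three assertions by integrating the relevant density over $z>0$.

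For assertion (1), I would substitute the $z>0$ branch of Theorem \ref{theo:B} into this integral and perform the change of variable $w=z^{2+\gamma}$. Using $\nu=\frac{2-\delta}{2+\gamma}$ and $\alpha+\beta=\nu$, the algebraic factor $z^{\gamma+\alpha(2+\gamma)+\delta-1}\,dz$ becomes $\frac{1}{2+\gamma}w^{-\beta}\,dw$, so the integral reduces to a Laplace transform of a confluent hypergeometric function,
$$\int_0^{+\infty}w^{-\beta}e^{-\frac{w}{A(b-x)}}\,{}_1F_1\!\left(\alpha;1+\alpha;\tfrac{(x-a)w}{A(b-x)(b-a)}\right)dw.$$
I would then invoke the classical identity $\int_0^{+\infty}e^{-sw}w^{q-1}\,{}_1F_1(p;r;kw)\,dw=\Gamma(q)s^{-q}\,{}_2F_1(p,q;r;k/s)$, which applies here since $q=1-\beta>0$ and $k/s=\frac{x-a}{b-a}\in(0,1)$, and which produces ${}_2F_1(\alpha,1-\beta;1+\alpha;\frac{x-a}{b-a})$ together with a factor $(A(b-x))^{1-\beta}$. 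It then remains to collect the constants: the powers of $A$ and of $b-x$ cancel those hidden in $D_+$, the relation $\frac{2-\delta}{(2+\gamma)\nu}=1$ disposes of the remaining algebraic factors, and the reflection formula $\Gamma(\beta)\Gamma(1-\beta)=\frac{\pi}{\sin(\pi\beta)}$ turns the leftover into $\frac{\Gamma(\nu)}{\Gamma(1+\alpha)\Gamma(\beta)}$, which is exactly the announced prefactor.

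For assertion (2), integrating the density of Theorem \ref{theo:C}(2) over $z>0$ will annihilate its second term (supported on $\{z<0\}$), while Fubini's theorem turns the first term into the stated integral with $\Pb_{(x-u,0)}(T_b<T_a)$ inside. For assertion (3), the first term of Theorem \ref{theo:C}(1) is handled in the same way and yields the integral over $(0,b-x)$; for the remaining Bessel term I would avoid a direct computation and argue by normalisation instead. Integrating the full density of Theorem \ref{theo:C}(1) over $z\in\R$ gives $1$ (because $T_{ab}<\infty$ almost surely), integrating its first term over $\R$ gives $\frac{y^{2-\delta}}{A^\nu\Gamma(\nu)}\int_0^{b-x}u^{-\nu-1}e^{-y^{2+\gamma}/(Au)}\,du$, and a change of variable shows $\frac{y^{2-\delta}}{A^\nu\Gamma(\nu)}\int_0^{+\infty}u^{-\nu-1}e^{-y^{2+\gamma}/(Au)}\,du=1$; therefore the integral of the Bessel term over $z>0$ (equivalently over $\R$, since it is carried by $\{z>0\}$) must equal $\frac{y^{2-\delta}}{A^\nu\Gamma(\nu)}\int_{b-x}^{+\infty}u^{-\nu-1}e^{-y^{2+\gamma}/(Au)}\,du$, which gives (3). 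Alternatively, one could expand $I_\nu$ in power series and recognise a lower incomplete Gamma function.

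The only genuinely delicate step is the constant bookkeeping in (1); everything else reduces to a change of variable, Fubini's theorem, or the normalisation identity. It is also worth recording — as should follow from the analysis leading to Theorems \ref{theo:B} and \ref{theo:C} — that $T_{ab}<\infty$ almost surely, since this is what makes those densities genuine probability densities and legitimises the normalisation argument used in (3).
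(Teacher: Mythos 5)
Your proposal is correct, and the reduction to $\Pb_{(x,y)}(Y_{T_{ab}}>0)$ (together with the a.s.\ identity $\{T_b<T_a\}=\{Y_{T_{ab}}>0\}$) is exactly the paper's starting point. Where you diverge is in the computation. For part (1) the paper does not integrate the density of $Y_{T_{ab}}$ at all: it writes $\Pb_{(x,0)}(T_b<T_a)=\Pb_{(x,0)}(X_{\zeta_{ab}}-b\geq 0)$ and integrates the explicit Beta-type density of $X_{\zeta_{ab}}-b$ from Lemma \ref{lem:lawXzetaab}, which reduces to the Gauss integral representation (\ref{eq:Gauss}) already used earlier and immediately produces the ${}_2F_1$. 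You instead integrate the $z>0$ branch of Theorem \ref{theo:B} and invoke the Laplace transform $\int_0^{+\infty}e^{-sw}w^{q-1}{}_1F_1(p;r;kw)\,dw=\Gamma(q)s^{-q}{}_2F_1(p,q;r;k/s)$; this works (I checked the exponent bookkeeping and the constants, which do collapse to $\Gamma(\nu)/(\Gamma(1+\alpha)\Gamma(\beta))$ via the reflection formula and $\frac{2-\delta}{(2+\gamma)\nu}=1$), but it effectively undoes the Laplace inversion that produced Theorem \ref{theo:B} from Lemma \ref{lem:lawXzetaab}, so the paper's route is one step shorter. For parts (2) and (3) the paper simply sets $s=0$ in the Mellin transforms of Section \ref{sec:3.3}, the second integral in (3) arising as $\Pb_{(x,y)}(X_{\sigma_0}>b)$ computed from Lemma \ref{lem:Xs0}; your Fubini-plus-normalisation argument for the Bessel term is a clean equivalent, and your justification that the densities integrate to one (via $T_{ab}\leq T_b<\infty$ a.s., which follows from Theorem \ref{theo:A}) is the right point to flag. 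No gaps.
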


\begin{remark}
We believe our results remain valid when $\delta\in (0,1)$, but our proofs unfortunately do not apply to this case as we need $Y$ to be a semimartingale in order to apply the It\^o-Tanaka formula, see Section \ref{sec:MM}.
\end{remark}

\noindent

\subsection{Explicit expressions for $h$} Before going to the proofs of the Theorems, we mention the following Lemma which allows to obtain an explicit expression for the harmonic function~$h$.

\begin{lemma}\label{lem:Xs0} The probability density function of $X_{\sigma_0}$ is given by :
$$\Pb_{(x,y)}\left(X_{\sigma_0}\in dz\right)/dz = \left\{
\begin{array}{lr}
\displaystyle  \frac{A^{-\nu} }{\Gamma\left(\nu\right)}  \frac{y^{2-\delta}}{(z-x)^{\nu+1}}  \exp\left(- \frac{y^{2+\gamma} }{A\, (z-x)} \right)\Un_{\{z>x\}} \, 
&  \text{if }y>0, \\
\vspace{-.1cm}\\
\displaystyle\frac{1}{\Gamma\left(\nu\right)}  \left(\frac{c}{A}\right)^{\nu} \frac{|y|^{2-\delta}}{|z-x|^{\nu+1}}  \exp\left(- \frac{c|y|^{2+\gamma} }{A|z-x|} \right) \Un_{\{z<x\}}\,   &  \text{if }y<0.\\
\end{array}\right.
$$
\end{lemma}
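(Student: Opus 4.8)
The plan is to realize $X_{\sigma_0}-x$ (when $y>0$; resp. $c^{-1}(x-X_{\sigma_0})$ when $y<0$) as the lifetime of an auxiliary Bessel process obtained from $Y$ by a deterministic time change, and then to conclude by the classical formula for the first hitting time of $0$ of a Bessel process of dimension in $(0,2)$. When $y>0$, $Y$ stays positive on $[0,\sigma_0)$ and, on $(0,\infty)$, its generator is $\frac12\frac{d^2}{dy^2}+\frac{\delta-1}{2y}\frac{d}{dy}$, i.e. that of a Bessel process of dimension $\delta$ (immediate from the given scale function and speed measure, since multiplying $s$ by $1-\eta$ and $m$ by $\frac1{1-\eta}$ leaves $\frac12\frac{d}{dm}\frac{d}{ds}$ unchanged; see also \cite{Ble,AlAy}). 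Hence, up to $\sigma_0$, $Y$ is such a process issued from $y$ and stopped at its first zero. Since $X_{\sigma_0}=x+\int_0^{\sigma_0}Y_s^\gamma\,ds$, it suffices to determine the law of $J:=\int_0^{\sigma_0}Y_s^\gamma\,ds$ for a Bessel$(\delta)$ process started from $y>0$; the case $y<0$ then follows because $|Y|$ is such a process and $X_{\sigma_0}=x-c\int_0^{\sigma_0}|Y_s|^\gamma\,ds$.

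For the time change, put $\rho_t=\frac{2}{2+\gamma}Y_t^{1+\gamma/2}$ for $t<\sigma_0$. From $dY_t=dB_t+\frac{\delta-1}{2Y_t}dt$, It\^o's formula (licit since $Y>0$ on $[0,\sigma_0)$) gives $d\rho_t=Y_t^{\gamma/2}\,dB_t+\frac{2\delta-2+\gamma}{4}Y_t^{\gamma/2-1}\,dt$. Let $A_t=\int_0^tY_s^\gamma\,ds$, which is continuous and strictly increasing on $[0,\sigma_0)$, let $\tau$ be its inverse, and set $Z_u=\rho_{\tau_u}$ for $0\le u<J=A_{\sigma_0}$. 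The martingale part of $\rho$ has quadratic variation $A_t$, so by the Dambis--Dubins--Schwarz theorem it becomes a Brownian motion $\tilde B$ under the time change; performing the change of variable $s=\tau_v$ (so $ds=Y_{\tau_v}^{-\gamma}\,dv$) in the drift term and using $Y_{\tau_v}^{\gamma/2-1-\gamma}=Y_{\tau_v}^{-1-\gamma/2}=\frac{2}{(2+\gamma)Z_v}$, one finds
$$Z_u=\tfrac{2}{2+\gamma}y^{1+\gamma/2}+\tilde B_u+\int_0^u\frac{\delta'-1}{2Z_v}\,dv,\qquad \delta':=\frac{2(\gamma+\delta)}{2+\gamma}=2-2\nu\in(0,2).$$
Thus $Z$ is a Bessel process of dimension $\delta'$ started from $\rho_0=\frac{2}{2+\gamma}y^{1+\gamma/2}$, and since $Y_t\to0\Leftrightarrow\rho_t\to0$ as $t\uparrow\sigma_0$, one has $J=\inf\{u:Z_u=0\}$.

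Finally, recall the classical fact that the first hitting time of $0$ by a Bessel process of dimension $d\in(0,2)$ started from $\rho>0$ has the inverse-Gamma density $t\mapsto\frac{(\rho^2/2)^\mu}{\Gamma(\mu)}t^{-\mu-1}e^{-\rho^2/(2t)}$, where $\mu=1-\frac d2\in(0,1)$. Applying this with $d=\delta'$, $\mu=\nu$ and $\rho_0^2/2=\frac{2y^{2+\gamma}}{(2+\gamma)^2}=\frac{y^{2+\gamma}}{A}$, and using $(2+\gamma)\nu=2-\delta$, yields $\Pb_{(x,y)}(X_{\sigma_0}\in dz)/dz=\frac{A^{-\nu}}{\Gamma(\nu)}\frac{y^{2-\delta}}{(z-x)^{\nu+1}}\exp(-\frac{y^{2+\gamma}}{A(z-x)})$ on $\{z>x\}$, which is the asserted formula for $y>0$. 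For $y<0$ the same computation gives the law of $J$ with $y$ replaced by $|y|$, and since $X_{\sigma_0}=x-cJ$ the substitution $w=x-z=|z-x|$ turns this into $\frac{1}{\Gamma(\nu)}(c/A)^\nu|y|^{2-\delta}|z-x|^{-\nu-1}\exp(-\frac{c|y|^{2+\gamma}}{A|z-x|})$ on $\{z<x\}$, as claimed. The only genuinely delicate point is the bookkeeping in the time change: one must check that the drift coefficient comes out exactly as $\frac{\delta'-1}{2}$ with $\delta'=2-2\nu$ (equivalently $\mu=\nu$), together with the minor verifications that $A$ is a bona fide time change on $[0,\sigma_0)$ and that $J<\infty$ almost surely — the latter holding because $\delta<2$ makes $\sigma_0<\infty$ a.s. and $Y$ is bounded on $[0,\sigma_0]$.
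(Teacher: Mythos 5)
Your proof is correct: the drift bookkeeping in the time change does come out as $\frac{\delta'-1}{2Z_v}$ with $\delta'=\frac{2(\gamma+\delta)}{2+\gamma}=2-2\nu\in(1,2)$, the identification $\rho_0^2/2=y^{2+\gamma}/A$ and $(2+\gamma)\nu=2-\delta$ then give exactly the stated density, and the reduction of the skew process to an ordinary Bessel$(\delta)$ process on each sign-constant stretch $[0,\sigma_0)$ is legitimate since the skewness only acts at $0$. However, your route is not the one the paper takes: the paper dispatches this lemma in two lines by declaring it classical, pointing either to inverting the Laplace transform of $X_{\sigma_0}$ computed by \c{C}etin \cite{Cet}, or to Dufresne's identity combined with the Lamperti transform \cite{Yor}. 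Your argument is essentially a self-contained instance of that second suggestion, but carried out directly on $Y$ rather than through geometric Brownian motion: the power map $\rho=\frac{2}{2+\gamma}y^{1+\gamma/2}$ together with the Dambis--Dubins--Schwarz clock $A_t=\int_0^t Y_s^\gamma\,ds$ turns the additive functional into the first hitting time of $0$ of a Bessel process of dimension $2-2\nu$, whose inverse-Gamma law is classical. What your version buys is a proof that needs no external Laplace-transform inversion and makes transparent why the exponent $\nu$ and the constant $A$ appear; what the paper's citation buys is brevity. One small presentational point: you should say explicitly that you are using the convention in which the generator is $\frac12\frac{d}{dm}\frac{d}{ds}$, so that the stated $(s,m)$ pair indeed yields $\frac12 f''+\frac{\delta-1}{2y}f'$ on $(0,\infty)$; with that said, the identification of $Y$ restricted to $(0,\sigma_0)$ with a Bessel$(\delta)$ process is immediate, as you claim.
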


\begin{proof}
This result is classic : one may for instance inverse the Laplace transform of $X_{\sigma_0}$ which was computed by Cetin \cite[Corollary 2.1]{Cet}. Another option is to use 
Dufresne's formula combined with the Lamperti transform for the geometric Brownian motion, see \cite[p.15-16]{Yor}.

\end{proof}

Lemma \ref{lem:Xs0} allows to give explicit expressions for $h$ in terms of Whittaker's functions $W_{\lambda, \mu}$, see \cite[Section 9.22]{GrRy}. Indeed, when $x>0$ and $y>0$, using \cite[p. 367]{GrRy}, we obtain :
\begin{align*}
h(x,y)&=  \frac{A^{-\nu}}{\Gamma\left(\nu\right)}  y^{2-\delta}\int_0^{x} (x-z)^{\beta}  z^{-\nu-1}\exp\left(- \frac{y^{2+\gamma} }{A\, z} \right) dz\\
&=  \frac{\Gamma(1+\beta)}{\Gamma(\nu)} A^{\frac{1-\nu}{2}} y^{- \frac{\delta+\gamma}{2}} x^{\frac{1-\nu}{2}+\beta}  e^{-\frac{y^{2+\gamma}}{2Ax}} W_{-\beta-\frac{1-\nu}{2} , \frac{\nu}{2}}\left(\frac{y^{2+\gamma}}{Ax} \right),
\end{align*}
while for $x>0$ and $y<0$, still from \cite[p. 368]{GrRy} : 
\begin{align*}
h(x,y)&= \frac{1}{\Gamma\left(\nu\right)}   \left(\frac{c}{A}\right)^{\nu} |y|^{2-\delta}  \int_{0}^{+\infty}(x+z)^{\beta}z^{-\nu-1}  \exp\left(- \frac{c|y|^{2+\gamma} }{A z} \right)  dz\\
&= \left( \frac{c}{A}\right)^{\frac{\nu-1}{2}} \frac{\Gamma(\nu-\beta)}{\Gamma(\nu)} x^{\beta+\frac{1-\nu}{2}} |y|^{- \frac{\delta+\gamma}{2}}  e^{\frac{c|y|^{2+\gamma}}{2A x}}\, W_{\beta+\frac{1-\nu}{2} , \frac{\nu}{2}}\left(\frac{c |y|^{2+\gamma}}{Ax}\right).
\end{align*}
In particular, letting  $x\rightarrow 0$, we deduce that for $y\leq 0$ :
$$h(0,y) = \frac{\Gamma(\nu-\beta)}{\Gamma(\nu)}  \left( \frac{c}{A}\right)^{\beta} |y|^{2\theta}.$$

\section{Proof of Theorem \ref{theo:A}}\label{sec:2}

\noindent
The proof is divided in three steps :
\begin{enumerate}[$i)$]
\item we first compute the Mellin transform of  $X_{\zeta_b}-b$ in Sections \ref{sec:2.1} and \ref{sec:MM}, 
\item we then deduce in Section \ref{sec:2.3} some crude estimates on the survival function of $\zeta_b$,
\item and we finally prove in Section \ref{sec:2.4} the asymptotics of Theorem \ref{theo:A}.
\end{enumerate}

\noindent

\subsection{Study of the first zero of $Y$ after $T_b$}\label{sec:2.1}

Define
\begin{equation}\label{eq:MM}
M_+ = \E\left[X_1^{-\frac{2}{2+\gamma}}\Un_{\{X_1>0,\, Y_1\leq0\}}\right] \qquad \text{ and }\qquad M_- = \E\left[|X_1|^{-\frac{2}{2+\gamma}}\Un_{\{X_1<0, \,Y_1\leq0\}}\right].
\end{equation}
The value of the ratio $M_-/M_+$ will be the key to compute the persistence exponent of $X$. We give its value in the following lemma, whose proof is postponed to the next Section  \ref{sec:MM}.
\begin{lemma}\label{lem:AA}
The moments $M_-$ and $M_+$ are finite and their ratio $M_-/M_+$ equals :
$$
\frac{M_-}{M_+}= \frac{c^{\nu} \frac{1-\eta}{1+\eta}\sin\left(\frac{2\pi}{2+\gamma}\right) +  \sin\left(\frac{\delta\pi}{2+\gamma}\right)}{\sin\left(\nu \pi\right)}.   
$$
\end{lemma}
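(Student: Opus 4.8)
The plan is to compute the two quantities $M_+$ and $M_-$ separately by reducing each to an integral involving the joint law of $(X_1, Y_1)$, and then simplify the ratio. Since both moments are built from the self-similar pair $(X,Y)$, the natural first move is to rewrite $M_\pm$ using the excursion structure of $Y$ around $0$, or more directly via the semigroup of the skew Bessel process. Concretely, I would condition on the sign of $Y_1$; on the event $\{Y_1 \le 0\}$ one should be able to express $\E[\,|X_1|^{-2/(2+\gamma)}\Un_{\{\pm X_1>0\}}\,]$ as a space-time integral against the skew Bessel transition density and the distribution of the additive functional. The key structural input is that the pair enjoys the scaling $(X_{kt},Y_{kt}) \elaw (k^{1+\gamma/2}X_t, k^{1/2}Y_t)$, which converts the negative power $|X_1|^{-2/(2+\gamma)}$ precisely into the exponent that makes the relevant integrals homogeneous — this is why the exponent $-2/(2+\gamma)$ appears in the definition \eqref{eq:MM} and why $\nu = (2-\delta)/(2+\gamma)$ will be the governing parameter.

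The cleanest route I would try is to relate $M_\pm$ to the Laplace transform, in the scaling variable, of the first hitting time $\sigma_0$ and of $X_{\sigma_0}$, whose density is given explicitly in Lemma~\ref{lem:Xs0}. Indeed, integrating the identity
\[
\E\!\left[|X_1|^{-\frac{2}{2+\gamma}}\Un_{\{\pm X_1>0,\,Y_1\le0\}}\right]
\]
over time and using the time-reversal / last-exit decomposition of $Y$ at its zeros should turn $M_\pm$ into $\int_0^\infty$ of the $\sigma_0$-densities started from a negative level, against a gamma kernel. The gamma integrals $\int_0^\infty u^{-\nu-1}e^{-a/u}\,du = \Gamma(\nu)a^{-\nu}$ (the same ones already used to integrate the density in Lemma~\ref{lem:Xs0}) will produce the constants; the two contributions differ only through whether the excursion straddling time $1$ is positive (weight $1/(1-\eta)$, scale $s$ on $(0,\infty)$) or negative (weight $c^\nu/(1+\eta)$, coming from $V_\gamma(y) = -c|y|^\gamma$ for $y<0$). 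After these elementary integrals, $M_-/M_+$ should collapse to a ratio of trigonometric expressions; the occurrence of $\sin(2\pi/(2+\gamma))$, $\sin(\delta\pi/(2+\gamma))$ and $\sin(\nu\pi)$ is forced by the reflection-type identity $\Gamma(z)\Gamma(1-z) = \pi/\sin(\pi z)$ applied with arguments $\nu$, $2/(2+\gamma)$ and $\delta/(2+\gamma)$, noting $\nu + 2/(2+\gamma) + (\text{something}) $ telescopes — more precisely $\delta/(2+\gamma) = 2/(2+\gamma) - \nu$, so the three sines are not independent and a sum-to-product identity yields the stated form.

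Finiteness of $M_\pm$ is the preliminary point: near $X_1 = 0$ the density of $X_1$ behaves well enough that $|X_1|^{-2/(2+\gamma)}$ is integrable precisely because $2/(2+\gamma) < 1$, while on the event $\{Y_1\le 0\}$ the process must have made at least one excursion, so $X_1$ is genuinely spread out; the explicit density in Lemma~\ref{lem:Xs0}, which decays like $(z-x)^{-\nu-1}$ with $\nu>0$ at infinity and is killed by the exponential near $0$, gives the bound directly after the space-time integration. I expect the main obstacle to be bookkeeping the excursion-theoretic decomposition at time $1$ correctly — in particular getting the weights $\tfrac{1-\eta}{2}$ versus $\tfrac{1+\eta}{2}$ on positive and negative excursions, and the factor $c^\nu$ coming from the negative part of $V_\gamma$, to land in the right places — rather than any analytic difficulty; once the decomposition is set up, the remaining computation is a pair of gamma integrals and one trigonometric identity.
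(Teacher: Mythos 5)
Your plan founders on the point the paper explicitly flags at the start of its computation: the joint law of $(X_1,Y_1)$ is not known, so there is no ``space-time integral against the skew Bessel transition density and the distribution of the additive functional'' to write down. Lemma~\ref{lem:Xs0} only gives the law of $X_{\sigma_0}$, the functional accumulated up to the \emph{first hitting time of $0$} from a fixed level $y$; it says nothing about the functional accumulated up to the \emph{fixed time} $t=1$, which is what $M_\pm$ involve. Worse, you misidentify what separates $M_+$ from $M_-$: both are restricted to $\{Y_1\le 0\}$, so the excursion straddling time $1$ is negative in both cases; the two moments differ through the sign of the accumulated functional $X_1$, which depends on the entire past and is precisely the quantity whose law is inaccessible to the last-exit/excursion bookkeeping you describe. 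The weights $\frac{1\pm\eta}{2}$ and the factor $c^{\nu}$ will not ``land in the right places'' on their own.

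The actual proof takes a route your sketch does not anticipate: it forms the complex resolvent $\Upsilon=\int_0^{+\infty}\E\left[e^{-\i X_t}\Un_{\{Y_t\le 0\}}\right]dt$, whose real and imaginary parts recover, after scaling and a Fresnel-type integral, the symmetric and antisymmetric combinations $\E\left[|X_1|^{-2/(2+\gamma)}\Un_{\{Y_1\le0\}}\right]$ and $\E\left[\sgn(X_1)|X_1|^{-2/(2+\gamma)}\Un_{\{Y_1\le0\}}\right]$, hence $M_\pm$. The value of $\Upsilon$ is then obtained from a complex Feynman--Kac identity (Lemma~\ref{lem:FK}): one exhibits explicit solutions $\varphi,\psi$ of $\tfrac12\phi''=\i\,|y|^{1/\nu-2}(\Un_{\{y>0\}}-c\Un_{\{y<0\}})\phi$ with the skew gluing condition at $0$, built from $K_\nu$ and $I_\nu$ with complex arguments, computes their Wronskian, and justifies the martingale argument via the It\^o--Tanaka formula for $Z=\sgn(Y)|Y|^{2-\delta}$. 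The phase of the resulting expression yields the stated ratio. Finally, finiteness of $M_\pm$ is not the soft point you make it out to be (you cannot invoke a density of $X_1$ near $0$ that you do not possess); the paper needs a separate Abelian argument showing that $\lambda\mapsto\int_0^{+\infty} e^{-\lambda s}s^{2/(2+\gamma)-1}\E\left[\cos(s|X_1|)\Un_{\{Y_1\le0\}}\right]ds$ is continuous at $0^+$, combined with Fatou's lemma. These two ingredients --- the complex potential and the finiteness argument --- are the substance of the proof, and both are missing from your proposal.
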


\noindent
We start by computing the Mellin transform of $X_{\zeta_b}-b$.

\begin{proposition}\label{prop:Xzeta}
Let $\{x<b \text{ and }y\in \R\}$ or $\{x=b \text{ and }y<0\}$. The Mellin transform of $X_{\zeta_b}-b$ is given for $s\in \left(\beta-1, \beta\right)$ by :
\begin{equation}\label{eq:Xzeta}
\E_{(x,y)}\left[(X_{\zeta_b}-b)^{s}\right] =  \frac{\sin\left(\pi \beta\right)}{\sin\left(\pi(\beta- s)\right)}\E_{(x,y)}\left[(b-X_{\sigma_0})_+^{s}\right] + \E_{(x,y)}\left[(X_{\sigma_0}-b)_+^{s}\right].  
\end{equation}
As a consequence,
\begin{equation}\label{eq:MelYT}
 \E_{(x,y)}\left[ Y_{T_b}^{s}\right] =A^{\frac{s}{2+\gamma}} \frac{\Gamma(\nu)}{\Gamma\left(\nu-\frac{s}{2+\gamma}  \right)}  \E_{(x,y)}\left[(X_{\zeta_b}-b)^{\frac{s}{2+\gamma}}\right].
 \end{equation}
\end{proposition}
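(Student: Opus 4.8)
The plan is to establish the two formulas separately: \eqref{eq:MelYT} will follow from \eqref{eq:Xzeta} by a short application of the strong Markov property at $T_b$, while \eqref{eq:Xzeta} itself rests on excursion theory for $Y$ away from $0$. Let me first deal with \eqref{eq:MelYT}. Since $Y$ performs excursions of both signs, the additive functional $X$ oscillates --- this follows from the excursion analysis below --- so $\zeta_b<\infty$, hence $T_b\le\zeta_b<\infty$ $\Pb_{(x,y)}$-a.s. At time $T_b$ one has $X_{T_b}=b$ and $Y_{T_b}>0$, because $X$ can only increase while $Y>0$; consequently the first zero of $Y$ after $T_b$ is exactly $\zeta_b$, and on $[T_b,\zeta_b]$ one has $V_\gamma(Y_u)=Y_u^\gamma$, so that $X_{\zeta_b}-b=\int_{T_b}^{\zeta_b}Y_u^\gamma\,du$. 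By the strong Markov property at $T_b$, this quantity is, conditionally on $Y_{T_b}$, distributed as $X_{\sigma_0}$ under $\Pb_{(0,Y_{T_b})}$, which by Lemma \ref{lem:Xs0} is the law of $Y_{T_b}^{2+\gamma}/(AG)$ with $G$ a $\Gamma(\nu)$ random variable (this is seen by comparing the two densities and using $(2+\gamma)\nu=2-\delta$). Raising to the power $\tfrac{s}{2+\gamma}$, using $\E\big[G^{-s/(2+\gamma)}\big]=\Gamma\big(\nu-\tfrac{s}{2+\gamma}\big)/\Gamma(\nu)$ and taking expectations yields \eqref{eq:MelYT}.

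For \eqref{eq:Xzeta} I would first reduce to the origin. Because $V_\gamma(Y_u)$ does not involve $X$, the pair $(X_t-x',Y_t)_{t\ge0}$ under $\Pb_{(x',0)}$ has the law of $(X,Y)$ under $\Pb$; therefore $X_{\zeta_b}-b$ under $\Pb_{(x',0)}$ is distributed as $X_{\zeta_{b-x'}}-(b-x')$ under $\Pb$, and the scaling property then gives $X_{\zeta_{b'}}-b'\elaw b'\,(X_{\zeta_1}-1)$ under $\Pb$, so that $\E_{(x',0)}\big[(X_{\zeta_b}-b)^s\big]=C_s\,(b-x')^s$ with $C_s:=\E\big[(X_{\zeta_1}-1)^s\big]$. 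Now apply the strong Markov property at $\sigma_0$. If $y<0$ (or $x=b$), then $X$ decreases on $[0,\sigma_0]$, so $X_{\sigma_0}<x\le b$, $\zeta_b$ is attained only after $\sigma_0$, and one obtains $\E_{(x,y)}\big[(X_{\zeta_b}-b)^s\big]=C_s\,\E_{(x,y)}\big[(b-X_{\sigma_0})_+^s\big]$. If $y>0$, then either $X_{\sigma_0}\ge b$, in which case $\zeta_b=\sigma_0$, or $X_{\sigma_0}<b$, and summing the two contributions yields $\E_{(x,y)}\big[(X_{\zeta_b}-b)^s\big]=\E_{(x,y)}\big[(X_{\sigma_0}-b)_+^s\big]+C_s\,\E_{(x,y)}\big[(b-X_{\sigma_0})_+^s\big]$; the case $y=0$ is the degenerate case $\sigma_0=0$ of either formula. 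In every case \eqref{eq:Xzeta} reduces to showing $C_s=\sin(\pi\beta)/\sin(\pi(\beta-s))$ for $s\in(\beta-1,\beta)$; finiteness of $\E_{(x,y)}\big[(X_{\sigma_0}-b)_+^s\big]$ and $\E_{(x,y)}\big[(b-X_{\sigma_0})_+^s\big]$ on that range is immediate from the explicit density of Lemma \ref{lem:Xs0}.

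The core of the proof is thus the computation of $C_s$, which I would carry out via excursion theory for $Y$ away from $0$. Let $\tau_\ell$ be the inverse of the local time of $Y$ at $0$, and set $Z_\ell:=X_{\tau_\ell}$. Then $Z$ is a Lévy process: it is purely discontinuous, since while the local time increases one has $Y=0$, hence $V_\gamma(Y)=0$ and $X$ constant, and its jumps are the signed excursion areas $\int_0^{\zeta(e)}V_\gamma(e_u)\,du$ --- positive for positive excursions, negative for negative ones. Combining the scaling property of $(X,Y)$ with the fact that the inverse local time at $0$ of the Bessel process $|Y|$ of dimension $\delta\in[1,2)$ is a stable subordinator of index $1-\tfrac\delta2$, one identifies $Z$ as a strictly stable process of index $\nu$. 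Its remaining parameter, the positivity parameter $\varrho$ --- equivalently the stable index $\nu\varrho$ of its ascending ladder height subordinator --- is the delicate point; it is governed by the constants $M_\pm$ of \eqref{eq:MM}, which, through a last-exit decomposition at the last zero of $Y$ before a fixed time, encode the Wiener--Hopf-type information that tells apart the positive and negative excursions, and by Lemma \ref{lem:AA} it comes out to $\nu\varrho=\beta$ (so that $\nu(1-\varrho)=\alpha$). Since $Z$ is not spectrally negative it crosses the level $1$ by a jump, so $X_{\zeta_1}=Z_{T^+_1}$ and $X_{\zeta_1}-1$ is the overshoot of $Z$ above $1$, which coincides with the overshoot of its ascending ladder height subordinator, a stable subordinator of index $\beta$. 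By the classical overshoot formula for a $\beta$-stable subordinator (obtained from the compensation formula together with the explicit renewal and Lévy densities), this overshoot has law $\tfrac{\sin(\pi\beta)}{\pi}v^{-\beta}(1+v)^{-1}dv$ on $(0,\infty)$, whose $s$-th moment equals $\tfrac{\sin(\pi\beta)}{\pi}\Gamma(1+s-\beta)\Gamma(\beta-s)=\sin(\pi\beta)/\sin(\pi(\beta-s))$ for $s\in(\beta-1,\beta)$ --- which also accounts for the admissible range of $s$ in the statement.

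I expect the genuine difficulty to lie entirely in this last step: making rigorous that $Z=(X_{\tau_\ell})_{\ell\ge0}$ is a stable process (in particular the scaling bookkeeping that pins its index at $\nu$), and above all the identification of its positivity parameter --- that is, the link between the constants $M_\pm$ and the Wiener--Hopf factorization of $Z$, together with the trigonometric simplification, via Lemma \ref{lem:AA}, that turns the resulting expression into $\beta$ (this is also the place where the standing hypothesis $\delta\ge1$ is used, through the Itô--Tanaka formula in the proof of Lemma \ref{lem:AA}). Everything else --- the two Markovian reductions, the scaling identity for $X_{\zeta_{b'}}-b'$, and the Gamma-function computation resting on Lemma \ref{lem:Xs0} --- is routine bookkeeping.
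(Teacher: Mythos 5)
Your reductions are sound and, for the most part, parallel the paper's: the derivation of \eqref{eq:MelYT} from the strong Markov property at $T_b$ (using that $Y_{T_b}>0$ and that $X_{\zeta_b}-b$ is, conditionally on $Y_{T_b}$, an independent copy of $X_{\sigma_0}$ under $\Pb_{(0,Y_{T_b})}$) is exactly the paper's identity \eqref{eq:XYX} combined with Lemma \ref{lem:Xs0}; and the splitting at $\sigma_0$ correctly reduces \eqref{eq:Xzeta} to showing that $C_s:=\E\left[(X_{\zeta_1}-1)^s\right]=\sin(\pi\beta)/\sin(\pi(\beta-s))$, i.e.\ to the overshoot law \eqref{eq:Xzetab}. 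For that core computation you propose a genuinely different, Isozaki--Kotani-style route: subordinate $X$ by the inverse local time of $Y$ at $0$ to get a strictly $\nu$-stable L\'evy process $Z$, and read off $X_{\zeta_1}-1$ as the overshoot of its ascending ladder height subordinator via the Dynkin--Lamperti formula. The skeleton is correct (the scaling bookkeeping does give index $\nu$, and the overshoot of a $\beta$-stable subordinator does have the stated density and Mellin transform).

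The genuine gap is the identification $\nu\varrho=\beta$ of the ladder-height index, which you assert ``comes out by Lemma \ref{lem:AA}'' through an unspecified last-exit decomposition. This is not a detail: it is the only place where the exponent $\theta$ (equivalently $\beta$) enters, and without it your argument proves \eqref{eq:Xzeta} with $\beta$ replaced by an unidentified constant. Moreover, the bridge you invoke does not exist in the form you suggest: the quantities $M_\pm$ of \eqref{eq:MM} are moments of $X_1$ at a \emph{fixed time}, restricted to $\{Y_1\le 0\}$, whereas the positivity parameter of $Z$ concerns the process at \emph{inverse local time}; relating the two requires either (i) a direct computation of the two tails of the L\'evy measure of $Z$ from the It\^o excursion measure of the skew Bessel process together with the law of the excursion area (the route of \cite{IsKo}, none of which you carry out), or (ii) the resolvent-type identity obtained by applying the strong Markov property at $\zeta_b$ to $\int_0^\infty \E\left[(X_t-b)_+^{-r}\Un_{\{Y_t\le 0\}}\right]dt$ and evaluating both sides by scaling in terms of $M_\pm$ --- which is precisely the paper's proof of Proposition \ref{prop:Xzeta} and bypasses $Z$ entirely. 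As written, the step you yourself flag as ``the genuine difficulty'' is the whole theorem, and deferring it to Lemma \ref{lem:AA} (which only computes the ratio $M_-/M_+$ and says nothing about the Wiener--Hopf factorisation of $Z$) leaves the proof incomplete.
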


The Mellin transform of $X_{\zeta_b}-b$ may easily be inverted using Lemma \ref{lem:Xs0}. In particular, when $y=0$, then $\sigma_0=0$ and the random variable $X_{\zeta_b}-b$ follows a Beta distribution :
\begin{equation}\label{eq:Xzetab}
\Pb_{(x,0)}(X_{\zeta_b}-b\in dz) =\frac{\sin\left(\pi\beta\right)}{\pi}  (b-x)^{\beta} \frac{z^{-\beta}}{b-x+z}\,  dz.
\end{equation}

\begin{proof}
Observe first  that using the Markov property and the scaling property, we have 
\begin{equation}\label{eq:XYX}
 \E_{(x,y)}\left[(X_{\zeta_b}-b)^{\frac{s}{2+\gamma}}\right] =  \E_{(x,y)}\left[\E_{(0,Y_{T_b})}\left[X_{\sigma_0}^{\frac{s}{2+\gamma}}  \right]\right]=   \E_{(x,y)}\left[ Y_{T_b}^{s}\right] \times \E_{(0, 1)}\left[X_{\sigma_0}^{\frac{s}{2+\gamma}}\right].
\end{equation}
From Lemma \ref{lem:Xs0}, we deduce that the Mellin transform of $X_{\sigma_0}$ equals 
$$ \E_{(0,1)}\left[X_{\sigma_0}^{\frac{s}{2+\gamma}}\right] =A^{-\frac{s}{2+\gamma}}\frac{ \Gamma\left(\nu- \frac{s}{2+\gamma}\right)}{\Gamma\left(\nu\right)}$$
which yields the Mellin transform of $Y_{T_b}$ by (\ref{eq:XYX}). 
\noindent
It remains thus to compute the Mellin transform of $X_{\zeta_b}-b$.
Applying the strong Markov property and using the continuity of $V_\gamma(Y)$, we first write  for $z\geq0$ 
\begin{align*}
\Pb_{(x,y)}\left( X_t \geq b+z,\, Y_t\leq 0\right) &= \Pb_{(x,y)}\left( X_{\zeta_b} + \int_0^{t-\zeta_b}V_\gamma(Y_{u+\zeta_b})\,du\geq b+z,\, Y_t\leq 0,\, \zeta_b\leq t  \right)\\
&= \Pb_{(x,y)}\left( X_{\zeta_b} + \int_0^{t-\zeta_b} V_\gamma(\widehat{Y}_{u})du\geq b+z,\, \widehat{Y}_{t-\zeta_b}\leq 0,\, \zeta_b\leq t  \right)
\end{align*}
where $(\widehat{X},\widehat{Y})$ is an independent copy of $(X,Y)$ started from $(0,0)$. Integrating in $t$ and $z$, we deduce that for $r\in (\frac{2}{2+\gamma},1)$ :
$$\int_0^{+\infty} \E_{(x,y)}\left[ (X_t-b)^{-r}_+ \Un_{\{Y_t\leq 0\}}\right] dt= \int_0^{+\infty} \E_{(x,y)}\left[ (X_{\zeta_b}-b  + \widehat{X}_t)^{-r}_+ \Un_{\{\widehat{Y}_t\leq 0\}}\right] dt.
$$
The scaling property and the change of variable $t=u\,(X_{\zeta_b}-b)^{\frac{2}{2+\gamma}}$  then yield 
\begin{multline}\label{eq:Markovzeta}
\int_0^{+\infty} \E_{(x,y)}\left[ (t^{1+\frac{\gamma}{2}}X_1-b)^{-r}_+ \Un_{\{Y_1\leq 0\}}\right] dt\\= \E_{(x,y)}\left[(X_{\zeta_b}-b)^{\frac{2}{2+\gamma}-r} \right]  \int_0^{+\infty}   \E\left[ (1  + u^{1+\frac{\gamma}{2}}\widehat{X}_1)^{-r}_+ \Un_{\{\widehat{Y}_1\leq 0\}}\right] du.
\end{multline}
Now the integral on the right hand-side of (\ref{eq:Markovzeta}) may be computed by separating the two cases $\widehat{X}_1>0$ and $\widehat{X}_1< 0$.
On the one hand, using the change of variable $v=u\widehat{X}_1$ and Fubini-Tonelli's theorem, we obtain 
\begin{align}
\notag \int_0^{+\infty}   \E\left[ (1  + u^{1+\frac{\gamma}{2}}\widehat{X}_1)^{-r}_+ \Un_{\{\widehat{X}_1>0,\,\widehat{Y}_1\leq 0\}}\right] du
&= M_{+}  \int_0^{+\infty}  (1  + v^{1+\frac{\gamma}{2}})^{-r} dv\\
\label{eq:A+}&= \frac{2 M_{+}}{2+\gamma}   B\left(\frac{2}{2+\gamma}, r - \frac{2}{2+\gamma} \right)
\end{align}
where $B(x,y)$ denote the usual Beta function. On the other hand, we get similarly
\begin{align}
 \notag \int_0^{+\infty}   \E\left[ (1  + u^{1+\frac{\gamma}{2}}\widehat{X}_1)^{-r}_+ \Un_{\{\widehat{X}_1<0,\,\widehat{Y}_1\leq 0\}}\right] du
&= M_{-}  \int_0^{1}  (1  - v^{1+\frac{\gamma}{2}})^{-r} dv\\
\label{eq:A-}&= \frac{2M_{-}}{2+\gamma}   B\left(\frac{2}{2+\gamma},   1-r \right).
\end{align}
To compute the left hand-side of (\ref{eq:Markovzeta}), observe first that 
$$
 \int_0^{+\infty} \E_{(x,y)}\left[ (X_t-b)_+^{-r}\Un_{\{Y_t\leq 0\}}\right] dt = \E_{(x,y)}\left[ \int_{\sigma_0}^{+\infty} (X_t-b)_+^{-r}\Un_{\{Y_t\leq 0\}} dt \right].
$$
Indeed, if $t\leq \sigma_0$, we either have  $X_t\leq b$ when $y\leq 0$, or $Y_t\geq 0$ when $y\geq0$. Then, applying the Markov property and denoting as before  $(\widehat{X},\widehat{Y})$ an independent copy of $(X,Y)$ started from $(0,0)$~:
\begin{align}
\notag&  \E_{(x,y)}\left[ \int_{\sigma_0}^{+\infty} (X_t-b)_+^{-r}\Un_{\{Y_t\leq 0\}} dt \right]\\
 \notag&\qquad\quad =\E_{(x,y)}\left[ \int_{0}^{+\infty} (X_{\sigma_0} +\widehat{X}_t-b)_+^{-r}\Un_{\{\widehat{Y}_{t}\leq 0\}} dt \right]\\
\notag  &\qquad\quad= \E_{(x,y)}\left[(X_{\sigma_0}-b)_+^{\frac{2}{2+\gamma}-r}\right]  \int_{0}^{+\infty} \E\left[(1+u^{1+\frac{\gamma}{2}}\widehat{X}_1)_+^{-r}\Un_{\{\widehat{Y}_{1}\leq 0\}}  \right]du \\
\label{eq:Is0}&\hspace{3.5cm}+\E_{(x,y)}\left[(b-X_{\sigma_0})_+^{\frac{2}{2+\gamma}-r}\right]  \int_{0}^{+\infty} \E\left[(u^{1+\frac{\gamma}{2}} \widehat{X}_1-1)_+^{-r}\Un_{\{\widehat{Y}_{1}\leq 0\}}  \right]du
\end{align}
where the last equality  follows by scaling and the changes of variables $t = \pm u\, (X_{\sigma_0}-b)$. The first integral in (\ref{eq:Is0})  is the same one as in the right hand-side of (\ref{eq:Markovzeta}) while the second one equals 
\begin{align}
 \notag\int_0^{+\infty}   \E\left[ (u^{1+\frac{\gamma}{2}}\widehat{X}_1 -1)^{-r}_+ \Un_{\{\widehat{Y}_1\leq 0\}}\right] du
&=  M_{+}  \int_1^{+\infty}  (v^{1+\frac{\gamma}{2}}-1)^{-r} dv\\
\label{eq:A+-}&= \frac{2M_{+}}{2+\gamma }   B\left(r-\frac{2}{2+\gamma}, 1-r  \right).
\end{align}
Plugging relations (\ref{eq:Markovzeta}) - (\ref{eq:A+-}) together and setting $\frac{2}{2+\gamma} - r =  s$, we finally obtain :
$$
 \E_{(x,y)}\left[(X_{\zeta_b}-b)^{s} \right] \\
=  \frac{  B\left(-s, 1+s- \frac{2}{2+\gamma}  \right)  \E_{(x,y)}\left[(b-X_{\sigma_0})_+^{s}\right]  }{  B\left(\frac{2}{2+\gamma}, -s \right) +  \frac{M_{-}}{M_+}  B\left(\frac{2}{2+\gamma},   1+s- \frac{2}{2+\gamma} \right)}+ \E_{(x,y)}\left[(X_{\sigma_0}-b)_+^{s}\right].$$
The two classic identities for the Beta function $B$ and the Gamma function $\Gamma$, i.e.  
$$B(x,y) = \frac{\Gamma(x)\Gamma(y)}{\Gamma(x+y)} \qquad \text{ and }\qquad \Gamma(z)\Gamma(1-z) = \frac{\pi}{\sin(\pi z)},$$ 
yield then the simplification
$$ \E_{(x,y)}\left[(X_{\zeta_b}-b)^{s} \right] 
=    \frac{\sin\left(\frac{2\pi}{2+\gamma}\right) }{\sin\left(\frac{2\pi}{2+\gamma}-\pi s\right) - \frac{M_-}{M_+} \sin(\pi s )}\E_{(x,y)}\left[(b-X_{\sigma_0})_+^{s}\right]+ \E_{(x,y)}\left[(X_{\sigma_0}-b)_+^{s}\right]$$
and Proposition \ref{prop:Xzeta} now follows from Lemma \ref{lem:AA}.
\end{proof}


%
%
%

\subsection{Computation of $M_-$ and $M_+$}\label{sec:MM} It does not seem evident to evaluate $M_-$ and $M_+$ as we do not know in general the distribution of the pair $(X,Y)$. We may nevertheless compute these specific moments by relying on a special (complex) instance of the Feyman-Kac formula (see for instance \cite[Appendix C]{JanArea}). To do so, we first notice that 
$$
2 M_+ =  \E\left[ |X_1|^{-\frac{2}{2+\gamma}}\Un_{\{Y_1\leq0\}}\right] +  \E\left[ \sgn(X_1)  |X_1|^{-\frac{2}{2+\gamma}}\Un_{\{Y_1\leq0\}}\right] $$
and
$$
2 M_- =  \E\left[ |X_1|^{-\frac{2}{2+\gamma}}\Un_{\{Y_1\leq0\}}\right] -  \E\left[ \sgn(X_1)  |X_1|^{-\frac{2}{2+\gamma}}\Un_{\{Y_1\leq0\}}\right]. $$
Define the integral 
$$\Upsilon = \int_0^{+\infty} \E\left[e^{-\i X_t}\Un_{\{Y_t\leq0\}}\right] dt$$
Using the scaling property, the change of variable $s=t^{1+\gamma/2}$ and assuming that one can exchange the integral and the expectation, we obtain :
\begin{align*}
\mathfrak{Re}\left(\Upsilon\right) &=  \int_0^{+\infty} \E\left[\cos( t^{1+\frac{\gamma}{2}}X_1)\Un_{\{Y_1\leq0\}}\right] dt\\
&= \frac{2}{2+\gamma} \E\left[  \int_0^{+\infty} s^{\frac{2}{2+\gamma}-1}  \cos(s |X_1|) ds\,\Un_{\{Y_1\leq0\}}\right]\\
&=  \frac{2}{2+\gamma} \Gamma\left(\frac{2}{2+\gamma}\right)\cos\left( \frac{\pi}{2+\gamma}  \right) \E\left[ |X_1|^{-\frac{2}{2+\gamma}}\Un_{\{Y_1\leq0\}}\right] 
\end{align*}
and similarly 
$$\mathfrak{Im}\left(\Upsilon\right) =  -\frac{2}{2+\gamma} \Gamma\left(\frac{2}{2+\gamma}\right)\sin\left( \frac{\pi}{2+\gamma}  \right)\E\left[ \sgn(X_1)  |X_1|^{-\frac{2}{2+\gamma}}\Un_{\{Y_1\leq0\}}\right]. $$
Therefore, the evaluation of $M_-$ and $M_+$ is reduced to that of $\Upsilon$.
Now, to compute $\Upsilon$, let us consider the two following functions
$$\varphi(y) =   \begin{cases}
 \sqrt{y}K_{\nu}\left(2\nu(1+\i)y^\frac{1}{2\nu}\right) &\qquad \text{if }y\geq0\\
 \vspace{-.3cm}\\
c^{\frac{\nu}{2}}e^{-\i \frac{\nu\pi}{2}}\sqrt{|y|}K_{\nu}\left(2\nu(1-\i)\sqrt{c}|y|^{\frac{1}{2\nu}}\right)  + \Delta  \sqrt{|y|}I_{\nu}\left(2\nu(1-\i)\sqrt{c}|y|^{\frac{1}{2\nu}}\right)  &\qquad \text{if }y<0\
\end{cases}
$$
and
$$\psi(y) = \begin{cases}
 \sqrt{y}K_{\nu}\left(2\nu(1+\i)y^{\frac{1}{2\nu}}\right)+  c^{\frac{\nu}{2}} e^{-\i \frac{\nu\pi}{2}}   \frac{1-\eta}{1+\eta}\Delta  \sqrt{y}I_{\nu}\left(2\nu(1+\i)y^{\frac{1}{2\nu}}\right) &\qquad \text{if }y\geq0\\
 \vspace{-.3cm}\\
 c^{\frac{\nu}{2}} e^{-\i \frac{\nu\pi}{2}}\sqrt{|y|}K_{\nu}\left(2\nu(1-\i)\sqrt{c}|y|^{\frac{1}{2\nu}}\right)  &\qquad \text{if }y<0
\end{cases}
$$
where $K_\nu$ denotes the modified Bessel function of the second kind and the constant $\Delta$ equals 
$$\Delta  =\frac{\pi}{2\sin(\nu\pi)} \left(c^{\frac{\nu}{2}} e^{-\i \frac{\nu\pi}{2}}+c^{-\frac{\nu}{2}}e^{\i \frac{\nu\pi}{2}}\frac{1+\eta}{1-\eta}\right).$$

\noindent
Both functions are continuous on $\R$, twice differentiable in $\R\backslash\{0\}$ and are solutions of the differential equation
\begin{equation}\label{eq:FK}
\begin{cases}
\frac{1}{2}  \phi^{\prime\prime}(y) = \i  |y|^{\frac{1}{\nu}-2}(\Un_{\{y>0\}} - c\Un_{\{y<0\}} )   \phi(y) \quad \text{for }y\neq0,\\
\vspace{-.3cm}\\
(1+\eta)\phi^\prime(0^+) = (1-\eta)\phi^\prime(0^-).
\end{cases}
\end{equation}
Notice also that using the asymptotics (see \cite[Section 8.4]{GrRy}) :
\begin{equation}\label{eq:asymIK}
I_\nu(z) \,\equi_{z\rightarrow 0}\, \frac{z^\nu}{2^\nu \Gamma(\nu+1)}, \quad K_\nu(z) \,\equi_{z\rightarrow0}\, \frac{2^{\nu-1} \Gamma(\nu)}{z^\nu}  \quad \text{ and } \quad K_\nu(z) \mathop{\sim}\limits_{z\rightarrow+\infty} \sqrt{\frac{\pi}{2z}}e^{-z},
\end{equation}
we deduce that $\varphi(0)=\psi(0)$ as well as the limits
$$\lim_{y\rightarrow +\infty} \varphi(y) =0 \qquad \text{ and }\quad \lim_{y\rightarrow -\infty} \psi(y) =0.$$
Furthermore, since $\varphi$ and $\psi$  are solutions of (\ref{eq:FK}), their Wronskien 
$$W(y)=\varphi(y)\psi^\prime(y) - \psi(y)\varphi^\prime(y)$$
is such that $W^\prime(y)=0$ for any $y\neq0$. Therefore, $W$ is a step function, and we set 
$$\omega_- = W(0^-)=\frac{\pi}{4\nu\sin(\nu\pi)}\left(\frac{1+\eta}{1-\eta}+ c^{\nu}e^{-\i \nu\pi} \right)
\quad \text{ and }\quad \omega_+ = W(0^+) =  \frac{1-\eta}{1+\eta} \,\omega_-.$$

\begin{lemma}\label{lem:FK}
Let $f$ be a measurable  and bounded function  on $\R$ and define 
\begin{equation}\label{eq:phi}
\phi(z)=\varphi(z) \int_{-\infty}^z \psi(u) f(u)  |u|^{\frac{2\delta-2}{2-\delta}} du +\psi(z) \int_{z}^{+\infty} \varphi(u) f(u)  |u|^{\frac{2\delta-2}{2-\delta}} du. 
\end{equation}
Then for any $y\in \R$:
\begin{equation}\label{eq:FK}
\int_0^{+\infty}\E_{(0,y )}\left[\left(\omega_+ \Un_{\{Y_t>0\}} + \omega_- \Un_{\{Y_t<0\}} \right)  e^{ -\i  X_t } f( \emph{sgn}(Y_t) |Y_t|^{2-\delta})\right] dt= 2\, \phi(\emph{sgn}(y) |y|^{2-\delta}). \end{equation}
\end{lemma}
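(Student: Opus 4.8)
The identity is a (complex) Feynman--Kac formula, and the plan is to exhibit $2\phi\bigl(\sgn(y)|y|^{2-\delta}\bigr)$ as the value at time $0$ of a local martingale whose behaviour at $+\infty$ reproduces the left-hand side.

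\smallskip
\noindent\emph{Step 1: $\phi$ solves an inhomogeneous ODE with the skew junction condition.}
Write $F_1(z)=\int_{-\infty}^{z}\psi(u)f(u)|u|^{\frac{2\delta-2}{2-\delta}}\,du$ and $F_2(z)=\int_{z}^{+\infty}\varphi(u)f(u)|u|^{\frac{2\delta-2}{2-\delta}}\,du$, so that $\phi=\varphi F_1+\psi F_2$. One first checks that $F_1,F_2$ are finite, hence bounded and continuous: near $0$ because $|u|^{\frac{2\delta-2}{2-\delta}}$ is integrable (as $\delta<2$) and $\varphi,\psi$ stay bounded, near $\pm\infty$ because of the exponential decay of $K_\nu$ recalled in \eqref{eq:asymIK}. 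Differentiating, the boundary contributions cancel, so $\phi'=\varphi'F_1+\psi'F_2$ and then $\phi''=\varphi''F_1+\psi''F_2-W\,f\,|\cdot|^{\frac{2\delta-2}{2-\delta}}$ with $W=\varphi\psi'-\psi\varphi'$. Since $\varphi,\psi$ solve $\tfrac12\phi''=\i|z|^{\frac1\nu-2}(\Un_{\{z>0\}}-c\,\Un_{\{z<0\}})\phi$, we have $W'\equiv0$ on $\R\setminus\{0\}$, so $W$ is the step function equal to $\omega_+$ on $(0,+\infty)$ and $\omega_-$ on $(-\infty,0)$ — the constant $\Delta$ being chosen precisely so that $\varphi,\psi$ satisfy $(1+\eta)\varphi'(0^+)=(1-\eta)\varphi'(0^-)$ (and likewise for $\psi$), which forces $\omega_+=\tfrac{1-\eta}{1+\eta}\omega_-$. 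Hence $\phi$ solves, on each half-line,
$$\tfrac12\,\phi''(z)=\i|z|^{\frac1\nu-2}\bigl(\Un_{\{z>0\}}-c\,\Un_{\{z<0\}}\bigr)\phi(z)-\tfrac12\,W(z)\,f(z)\,|z|^{\frac{2\delta-2}{2-\delta}},$$
it inherits the junction condition at $0$ (as $F_1,F_2$ are continuous), and $\phi(z)\to0$ as $z\to\pm\infty$ (as $\varphi\to0$ at $+\infty$, $\psi\to0$ at $-\infty$, and $F_1,F_2$ are bounded). Finally, the corresponding homogeneous problem has only the zero solution: a combination $a\varphi+b\psi$ decaying at $+\infty$ must have $b=0$ because the $I_\nu$-part of $\psi$ on $[0,\infty)$ grows exponentially, and then $a=0$ because the $I_\nu$-part of $\varphi$ on $(-\infty,0]$ grows exponentially. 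So these three conditions characterise $\phi$ among bounded functions.

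\smallskip
\noindent\emph{Step 2: the Feynman--Kac martingale.}
Set $Z_t=\sgn(Y_t)|Y_t|^{2-\delta}$. The point of this change of variables is that it puts $Y$ in piecewise natural scale: $Z$ is a continuous semimartingale — a piecewise-linear, hence difference-of-convex, function of the continuous local martingale $s(Y)$ — whose local-martingale part has quadratic variation a power of $|Z|$ and whose bounded-variation part is carried only by the local time at $0$; moreover $X_t=\int_0^t|Z_s|^{\gamma/(2-\delta)}(\Un_{\{Z_s\ge0\}}-c\,\Un_{\{Z_s<0\}})\,ds$, and a direct computation (multiplying the equation of Step~1 by $|z|^{-\frac{2\delta-2}{2-\delta}}$) shows that $\phi$ satisfies, on $\R\setminus\{0\}$, the Feynman--Kac equation $\mathcal L_Z\phi-\i|z|^{\gamma/(2-\delta)}(\Un_{\{z>0\}}-c\,\Un_{\{z<0\}})\phi=-\tfrac12\bigl(\omega_+\Un_{\{z>0\}}+\omega_-\Un_{\{z<0\}}\bigr)f$, where $\mathcal L_Z$ is the generator of $Z$ — this is exactly what the constants defining $\varphi,\psi,\omega_\pm$ are rigged for. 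Because $\delta\in[1,2)$ makes $Y$, hence $Z$, a semimartingale, I would then apply the It\^o--Tanaka (It\^o--Meyer) formula to $e^{-\i X_t}\phi(Z_t)$: the terms supported on $\{Z_t=0\}$ — the drift of $Z$ at $0$ against $\phi'$, and the atom of $d\phi'$ at $0$, read with the appropriate one-sided local times — combine into a multiple of $(1+\eta)\phi'(0^+)-(1-\eta)\phi'(0^-)=0$ and drop out, while by Step~1 the remaining finite-variation part of $e^{-\i X_t}\phi(Z_t)$ is $-\tfrac12\,e^{-\i X_t}\bigl(\omega_+\Un_{\{Z_t>0\}}+\omega_-\Un_{\{Z_t<0\}}\bigr)f(Z_t)\,dt$. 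Thus
$$M_t:=e^{-\i X_t}\phi(Z_t)+\tfrac12\int_0^t e^{-\i X_s}\bigl(\omega_+\Un_{\{Z_s>0\}}+\omega_-\Un_{\{Z_s<0\}}\bigr)f(Z_s)\,ds$$
is a local martingale; taking expectations in $M_t=M_0$, letting $t\to+\infty$, and using $|e^{-\i X}|\le1$ together with $\phi\to0$ at $\pm\infty$ to kill $\E_{(0,y)}[e^{-\i X_t}\phi(Z_t)]$, one recovers the asserted identity (note $\Un_{\{Z_s>0\}}=\Un_{\{Y_s>0\}}$ and $f(Z_s)=f(\sgn(Y_s)|Y_s|^{2-\delta})$). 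Equivalently, a Dynkin-type computation on intervals avoiding $0$, combined with the strong Markov property of $(X,Y)$, shows that the left-hand side, viewed as a function of $\sgn(y)|y|^{2-\delta}$, is a bounded solution of the boundary value problem of Step~1, and one concludes by the uniqueness established there.

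\smallskip
\noindent\emph{Main difficulty.}
The obstacles are analytic rather than structural. The time integral on the left-hand side is only conditionally convergent — $|e^{-\i X_t}|=1$ and $f$ is bounded, so there is no absolute integrability — so its meaning, the genuine-martingale property of $M_t$, and the vanishing of $\E_{(0,y)}[e^{-\i X_t}\phi(Z_t)]$ as $t\to+\infty$ must all be extracted from the oscillation of $X_t$, most conveniently by inserting a factor $e^{-qt}$ and letting $q\downarrow 0$ at the end. The second delicate point is the It\^o--Tanaka bookkeeping across the skew point $0$ and the identification of the coefficient of the local time of $Z$ at $0$ with a multiple of $(1+\eta)\phi'(0^+)-(1-\eta)\phi'(0^-)$; this is exactly where the hypothesis $\delta\ge 1$ (semimartingale property of $Y$) enters.
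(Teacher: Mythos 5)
Your proposal follows the paper's proof essentially step for step: the same change of variables $Z_t=\sgn(Y_t)|Y_t|^{2-\delta}$ via Blei's SDE, the same local martingale $M_t=e^{-\i X_t}\phi(Z_t)+\frac12\int_0^t W(Z_u)e^{-\i X_u}f(Z_u)\,du$, the same symmetric It\^o--Tanaka bookkeeping in which the atom of $\phi''$ at $0$ and the drift $\eta\, dL_t^0(Z)$ combine into a multiple of $(1+\eta)\phi'(0^+)-(1-\eta)\phi'(0^-)=0$, and the same passage to the limit $t\to+\infty$ after checking that $M$ is bounded on compact time intervals.

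There is, however, one genuine gap in your Step 1. The functions $F_1$ and $F_2$ are finite but \emph{not} bounded: on $(0,+\infty)$ the function $\psi$ contains the term $c^{\nu/2}e^{-\i\nu\pi/2}\tfrac{1-\eta}{1+\eta}\Delta\sqrt{y}\,I_\nu\bigl(2\nu(1+\i)y^{1/(2\nu)}\bigr)$, whose modulus grows like $e^{2\nu y^{1/(2\nu)}}$, so $F_1(z)\to\infty$ as $z\to+\infty$; symmetrically $F_2(z)\to\infty$ as $z\to-\infty$ because of the $\Delta\, I_\nu$ component of $\varphi$ on the negative half-line. Hence your justification of $\phi(z)\to0$ at $\pm\infty$ --- and of the boundedness of $\phi$ on $\R$, which you need both to upgrade $M$ from a local martingale to a true martingale and to kill the boundary term $\E_{(0,y)}\bigl[e^{-\i X_t}\phi(Z_t)\bigr]$ --- does not stand as written. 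What actually makes $\phi$ vanish at infinity is the cancellation between the exponential growth of $F_1(z)$ and the exponential decay of the factor $\varphi(z)=\sqrt{z}\,K_\nu\bigl(2\nu(1+\i)z^{1/(2\nu)}\bigr)$ multiplying it; the paper makes this precise with the modulus bounds $|K_\nu(2\nu(1+\i)u^{1/(2\nu)})|\le K_\nu(2\nu u^{1/(2\nu)})$ and $|I_\nu(2\nu(1+\i)u^{1/(2\nu)})|\le 2^{\nu/2}I_\nu(2\nu u^{1/(2\nu)})$ followed by Watson's lemma, which gives $|\varphi(z)F_1(z)|\le\kappa\, z^{-\gamma/(2-\delta)}$ for large $z$. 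Two smaller remarks: the uniqueness argument for the homogeneous boundary value problem is not needed on the martingale route you actually take; and the conditional convergence of the time integral is handled in the paper simply by reading the left-hand side as $\lim_{t\to\infty}\int_0^t$, obtained from $\E[M_0]=\E[M_t]$ (Fubini is applied on $[0,t]$ for each finite $t$, where everything is bounded) together with the vanishing of the boundary term --- no Abelian regularization by $e^{-qt}$ is required.
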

\begin{proof} Notice first that the function $\phi$ is continuous and such that $\lim\limits_{|z|\rightarrow +\infty} \phi(z)=0.$ Indeed, for $u>0$, using integral representations for $I_\nu$ and $K_\nu$ (see for instance \cite[Section 8.43]{GrRy}), we have  :
$$\left|K_{\nu}\left(2\nu(1+\i)u^{\frac{1}{2\nu}}\right)\right| = \left|\int_0^{+\infty} e^{-2\nu(1+\i) u^{\frac{1}{2\nu}}\cosh(t)} \cosh\left(\nu t\right) dt\right| \leq K_{\nu}\left(2\nu u^{\frac{1}{2\nu}}\right)$$
and
$$\left|I_{\nu}\left(2\nu(1+\i)u^{\frac{1}{2\nu}}\right)\right| =  \left|\frac{ \sqrt{u}\left(\nu(1+\i)\right)^{\nu}}{\Gamma(\nu+\frac{1}{2}) \sqrt{\pi}} \int_{0}^{\pi}  e^{- \left(2\nu(1+\i)u^{\frac{1}{2\nu}}\right) \cos(t)}  \left(\sin(t)\right)^{2\nu} dt \right|\leq  2^{\frac{\nu}{2}} I_{\nu}\left(2\nu u^{\frac{1}{2\nu}}\right)$$
hence, from (\ref{eq:asymIK}), we deduce that there exist two constants $\kappa_1, \kappa_2>0$ such that for $z\geq0$ :
\begin{multline*}
\left|\varphi(z) \int_{-\infty}^z \psi(u) f(u)  |u|^{\frac{2\delta-2}{2-\delta}} du\right| \\ \leq  \kappa_1  \sup_{x\in \R}|f(x)| \sqrt{z}  K_{\nu}\left(2\nu z^{\frac{1}{2\nu}}\right)  \left( \kappa_2 + \int_0^{z} \sqrt{u}I_{\nu}\left(2\nu u^{\frac{1}{2\nu}}\right) |u|^{\frac{2\delta-2}{2-\delta}}  du   \right).
\end{multline*}
Recalling the asymptotics 
$
\displaystyle I_\nu(z) \mathop{\sim}\limits_{z\rightarrow+\infty} \frac{e^z}{\sqrt{2\pi z}}$ and applying Watson's lemma
we deduce that there exists $\kappa>0$ such that for $z$ large enough :
$$\left|\varphi(z) \int_{-\infty}^z \psi(u) f(u)  |u|^{\frac{2\delta-2}{2-\delta}} du\right| \leq \kappa z^{- \frac{\gamma}{2-\delta}},\qquad\qquad z\rightarrow +\infty.$$
Proceeding similarly for the second integral in (\ref{eq:phi}), we conclude that $\lim\limits_{z\rightarrow +\infty} \phi(z)=0$, and by similar arguments that we also have $\lim\limits_{z\rightarrow -\infty} \phi(z)=0$.
Next, differentiating $\phi$  for $z\neq 0$, we obtain 
$$\phi^\prime(z)=\varphi^\prime(z) \int_{-\infty}^z \psi(u) f(u) |u|^{\frac{2\delta-2}{2-\delta}}  du +\psi^\prime(z) \int_{z}^{+\infty} \varphi(u) f(u)  |u|^{\frac{2\delta-2}{2-\delta}} du. $$
It thus follows from (\ref{eq:FK}) that 
\begin{equation}\label{eq:phi''}
\frac{1}{2}\phi^{\prime\prime}(dz)  = \i  V_{\frac{1}{\nu}-2}(z) \phi(z)dz - \frac{1}{2}W(z)f(z) |z|^{\frac{2\delta-2}{2-\delta}} dz + (\phi^\prime(0^+) -\phi^\prime(0^-)) \delta_0(dz)\end{equation}
where $ \delta_0$ denotes the Dirac measure at 0. Let us set
$$Z_t = \sgn(Y_t) |Y_t|^{2-\delta}.$$
\noindent
From Blei \cite[Theorem 2.22]{Ble}, $Z$ is the unique strong solution of the SDE :
\begin{equation}\label{eq:Blei}
Z_t = \text{sgn}(y)|y|^{2-\delta}+(2-\delta) \int_0^t |Z_s|^{\frac{1-\delta}{2-\delta}} dB_s + \eta L_t^{0}(Z)
\end{equation}
where $L_t^{0}(Z)$ denotes the semimartigale symmetric local time of $Z$.
Consider now the process
\begin{equation}\label{eq:defM}
M_t= e^{- \i \int_0^t V_{\frac{\gamma}{2-\delta}}(Z_s) ds} \phi(Z_t) + \frac{1}{2}\int_0^t  W(Z_u) e^{- \i \int_0^uV_{\frac{\gamma}{2-\delta}}(Z_s) ds} f(Z_u)du.
\end{equation}
We apply the symmetric It\^o-Tanaka formula (see \cite[Section 5.1]{Lej}) setting $\phi^\prime_{r}$ (resp. $\phi^\prime_{\ell}$) for the right-derivative (resp. the left-derivative) of $\phi$ :
\begin{align*}
dM_t &= \i  V_{\frac{\gamma}{2-\delta}}(Z_t) e^{- \i \int_0^t V_{\frac{\gamma}{2-\delta}}(Z_s) ds} \phi(Z_t)dt +    \frac{1}{2}\ e^{- \i \int_0^t V_{\frac{\gamma}{2-\delta}}(Z_s) ds} (\phi_{r}^\prime(Z_t) + \phi_{\ell}^\prime(Z_t)) dZ_t \\
&\qquad \qquad + \frac{1}{2} e^{- \i \int_0^t V_{\frac{\gamma}{2-\delta}}(Z_s) ds}    \int_{-\infty}^{+\infty}\phi^{\prime\prime}(da) dL_t^a(Z)  + \frac{1}{2}W(Z_t)  e^{- \i \int_0^tV_{\frac{\gamma}{2-\delta}}(Z_s) ds} f(Z_t).
\end{align*}
Using (\ref{eq:phi''}), (\ref{eq:Blei}) and the occupation time formula, we then obtain 
$$dM_t = \frac{2-\delta }{2}  e^{- \i \int_0^t V_{\frac{\gamma}{2-\delta}}(Z_s) ds} (\phi_r^\prime(Z_t) + \phi_{\ell}^\prime(Z_t)) |Z_t|^{\frac{1-\delta}{2-\delta}}  dB_t$$ 
which proves that $M$ is a local martingale. Furthermore, going back to the definition (\ref{eq:defM})  of $M$, we have the estimate for $r\geq0$ :
$$\sup_{0\leq t\leq r}| M_t | \leq \sup_{z\in \R}|\phi(z)| +r \frac{\max(|\omega_+|, |\omega_-|)}{2} \sup_{z\in \R}|f(z)| . $$
Therefore $M$ is bounded on $[0,r]$ which implies that $M$ is a martingale and the equality $\E_{(0,y)}\left[M_0\right]=\E_{(0,y)}\left[M_t\right]$ yields
$$
\phi(\sgn(y) |y|^{2-\delta})=\frac{1}{2} \E_{(0,y)}\left[\int_0^{t}  W(Z_u)e^{-\i \int_0^uV_{\frac{\gamma}{2-\delta}}(Z_s) ds} f(Z_u)du\right]+\E_{(0, y)}\left[e^{- \i\int_0^t V_{\frac{\gamma}{2-\delta}}(Z_s) ds} \phi(Z_t)\right].
$$
Lemma \ref{lem:FK} now follows by letting $t\rightarrow +\infty$, applying Fubini's theorem on the first expectation, and the scaling property and the dominated convergence theorem on the second since $\lim\limits_{|z|\rightarrow +\infty}\phi(z)=0$.

\end{proof}

To compute $M_-$ and $M_+$, we shall now apply Lemma \ref{lem:FK} with $y=0$ and $f(u) = \Un_{\{u\leq0\}}$. Using \cite[p. 676, Formula 16]{GrRy}, we obtain :
\begin{align}\label{eq:AppFK}
\int_0^{+\infty} \E\left[  e^{ -\i t^{1+\frac{\gamma}{2}}X_1}\Un_{\{Y_1\leq 0\}}\right] dt  &=2  \frac{\varphi(0)}{\omega_-} \int_{-\infty}^{0} \psi(y) |y|^{\frac{2\delta-2}{2-\delta}} dy \\
&\notag =H(\delta, \eta, \gamma, c) \times \left(c^{\nu}  e^{\i \frac{\pi}{2+\gamma}} + \frac{1+\eta}{1-\eta} e^{\i \frac{\pi(\delta-1)}{2+\gamma}}\right)
\end{align}
where $H(\delta, \eta, \gamma, c)$ is a positive constant, and it remains to prove that we may exchange the integral and the expectation on the left hand-side of (\ref{eq:AppFK}). To do so, it is sufficient by Lemma 1 in \cite{PSPers} to prove that $ \E\left[ |X_1|^{-\frac{2}{2+\gamma}}\Un_{\{Y_1\leq 0\}}\right]<+\infty$. To this end, to simplify the notation, let us define for $s>0$ 
$$g(s) = s^{\frac{2}{2+\gamma}-1} \E\left[\cos(s|X_1|)\Un_{\{Y_1\leq0\}}\right]$$
and  for $\lambda\geq 0$ 
$$G(\lambda) = \int_0^{+\infty} e^{-\lambda s} g(s)ds.$$
Integrating twice by parts, we deduce that
$$G(\lambda) = \int_0^{+\infty} \E\left[ \frac{1-\cos(s|X_1|)}{X_1^2}  \right] s^{\frac{2}{2+\gamma}-3}e^{-\lambda s} \left(\lambda^2 s^2 + \frac{2\lambda \gamma}{2+\gamma}   s+ \frac{2\gamma(1+\gamma)}{(2+\gamma)^2}\right)ds. $$
Then, applying Fatou's lemma and the Fubini-Tonelli theorem, we obtain 
$$\lim_{\lambda \rightarrow 0^+} G(\lambda) \geq \Gamma\left(\frac{2}{2+\gamma}\right)\cos\left( \frac{\pi}{2+\gamma}  \right) \E\left[ |X_1|^{-\frac{2}{2+\gamma}}\Un_{\{Y_1\leq0\}}\right]$$
and it remains to prove that the limit on the left hand-side is finite. Let $\varepsilon>0$. Since $G(0)$ is finite from (\ref{eq:AppFK}), we may choose $R$ large enough such that
$$\forall u\geq R, \qquad \left|\int_u^{+\infty} g(s)ds  \right|<\varepsilon.$$
We then decompose
$$
|G(0)-G(\lambda)| \leq \left| \int_0^{R} (1-e^{-\lambda s}) g(s)  ds\right| + \left| \int_R^{+\infty} (1-e^{-\lambda s}) g(s) ds\right|. 
$$
The first term on the right hand-side goes to zero as $\lambda\rightarrow0^+$ by the dominated convergence theorem while integrating by parts, the second term equals :
$$
\left|\left(1-e^{-\lambda R}\right) \int_R^{+\infty} g(s)ds + \lambda\int_R^{+\infty} e^{-\lambda u} \left(  \int_u^{+\infty}  g(s)ds\right) du\right|
\leq \varepsilon + \varepsilon \lambda \int_R^{+\infty} e^{-\lambda u} du \leq 2\varepsilon.
$$
This implies that $G$ is continuous at $0^+$, hence
$$ \Gamma\left(\frac{2}{2+\gamma}\right)\cos\left( \frac{\pi}{2+\gamma}  \right) \E\left[ |X_1|^{-\frac{2}{2+\gamma}}\Un_{\{Y_1\leq0\}}\right] \leq G(0)<+\infty.$$

\noindent
Finally, taking the real and imaginary parts in (\ref{eq:AppFK}), we deduce that :
$$
\frac{M_-}{M_+}= \frac{ 2 c^{\nu} +\frac{1+\eta}{1-\eta} \left(\frac{\cos\left(\frac{\pi (\delta-1)}{2+\gamma}\right) }{\cos\left(\frac{\pi}{2+\gamma}\right)} + \frac{\sin\left(\frac{\pi (\delta-1)}{2+\gamma}\right) }{\sin\left(\frac{\pi}{2+\gamma}\right)} \right)  }{\frac{1+\eta}{1-\eta} \left(\frac{\cos\left(\frac{\pi (\delta-1)}{2+\gamma}\right) }{\cos\left(\frac{\pi}{2+\gamma}\right)} - \frac{\sin\left(\frac{\pi (\delta-1)}{2+\gamma}\right) }{\sin\left(\frac{\pi}{2+\gamma}\right)} \right)  }
= \frac{c^{\nu} \frac{1-\eta}{1+\eta}\sin\left(\frac{2\pi}{2+\gamma}\right) +  \sin\left(\frac{\delta\pi}{2+\gamma}\right)}{\sin\left(\nu \pi\right)}
$$
which ends the proof of Lemma \ref{lem:AA}.

\qed

\begin{remark}
One may observe that the ratio $M_-/M_+$ admits a finite limit when $c\rightarrow 0^+$. However, when $c=0$, the process $X$  is always non negative, and one can check that  both moments $M_-$ and $M_+$ are no longer finite, see for instance Janson \cite[Section 29]{JanArea} in the Brownian case. \\
\end{remark}

\subsection{The asymptotics of $\zeta_b$}\label{sec:2.3}

We momentarily assume that $x=y=0$ and first prove the following crude asymptotics.
\begin{lemma}\label{lem:zetasup}
There exist two constants $0<\kappa_1\leq \kappa_2 < +\infty$ such that 
$$\kappa_1\, t^{-\theta}\leq \Pb(\zeta_b\geq t) \leq \kappa_2\, t^{-\theta}, \qquad t\rightarrow +\infty.$$
\end{lemma}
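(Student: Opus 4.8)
The goal is two-sided polynomial control on $\Pb(\zeta_b\geq t)$ with exponent $\theta=\frac{2+\gamma}{2}\beta$. The natural strategy is to go through the Mellin transform of $X_{\zeta_b}-b$ computed in Proposition~\ref{prop:Xzeta}, together with a renewal/subordination decomposition of the interval $[b,X_{\zeta_b})$ into ``blocks'', and a Tauberian-type argument. Since we start from $(0,0)$, we have $\sigma_0=0$ a.s., $X_{\sigma_0}=0$, and (\ref{eq:Xzeta}) collapses to $\E\left[(X_{\zeta_b}-b)^s\right]=\frac{\sin(\pi\beta)}{\sin(\pi(\beta-s))}\,b^{s}$ for $s\in(\beta-1,\beta)$; equivalently, by (\ref{eq:Xzetab}), $X_{\zeta_b}-b$ has the explicit density $\frac{\sin(\pi\beta)}{\pi}b^{\beta}z^{-\beta}(b+z)^{-1}\,dz$ on $(0,\infty)$. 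The point is that this heavy tail, $\Pb(X_{\zeta_b}-b\geq \lambda)\sim \frac{\sin(\pi\beta)}{\pi\beta}b^{\beta}\lambda^{-\beta}$ as $\lambda\to\infty$, is what forces the $t^{-\theta}$ behaviour, once translated back to time via scaling.

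\textbf{Key steps.} First I would relate $\zeta_b$ to the \emph{overshoot process}. Writing $\zeta_b^{(n)}$ for the $n$-th zero of $Y$ occurring after $X$ has successively crossed levels $b=b_0<b_1<\cdots$, where $b_{k+1}=X_{\zeta_b^{(k)}}$, the strong Markov property (as in (\ref{eq:XYX})) shows that the increments $X_{\zeta_b^{(k+1)}}-X_{\zeta_b^{(k)}}$, \emph{conditionally rescaled}, form a multiplicative random walk: from level $b_k$ one lands at $b_k(1+R_k)$ with $R_k$ i.i.d. copies of $X_{\zeta_1}-1$ under $\Pb_{(0,0)}$ — here one uses the scaling property of $(X,Y)$ stated in the introduction. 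Hence $\log(X_{\zeta_b^{(n)}}/b)$ is a random walk with i.i.d. increments $\log(1+R_k)$; by the tail estimate above, $\E[\log(1+R_k)]<\infty$ and is positive, so by the renewal theorem the number of blocks needed before $X$ exceeds a large level grows linearly in the log of that level. Second, I would convert a lower bound on $X_{\zeta_b}$ into a lower bound on $\zeta_b$ and an upper bound on $X_{\zeta_b}$ into an upper bound on $\zeta_b$: on each excursion interval, the time elapsed and the increment of $X$ are linked by scaling, $\Delta t \asymp (\Delta X)^{2/(2+\gamma)}\times(\text{duration of a normalized excursion})$, where the normalized durations are i.i.d. integrable; this gives $\zeta_b = \sum_{k} (\text{stuff})_k$ which, up to the fluctuations of an i.i.d. sum, is comparable to $(X_{\zeta_b}-b)^{2/(2+\gamma)}$ in the sense that $\Pb(\zeta_b\geq t)\asymp \Pb\big((X_{\zeta_b}-b)^{2/(2+\gamma)}\gtrsim t\big) = \Pb\big(X_{\zeta_b}-b\gtrsim t^{1+\gamma/2}\big)$. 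Third, plug in the tail $\Pb(X_{\zeta_b}-b\geq \lambda)\asymp \lambda^{-\beta}$ with $\lambda = t^{1+\gamma/2}$ to read off $\Pb(\zeta_b\geq t)\asymp t^{-\beta(2+\gamma)/2}=t^{-\theta}$, which is exactly the claimed two-sided bound. For the clean upper bound one can shortcut via Markov's inequality on a fractional moment: for $s\in(0,\beta)$, the scaling identity gives $\zeta_b \overset{\text{law}}{=} (X_{\zeta_b}-b)^{2/(2+\gamma)}\cdot \widehat\zeta$-type relations, whence $\Pb(\zeta_b\geq t)\leq t^{-s(2+\gamma)/2}\,\E[(X_{\zeta_b}-b)^{s}]\cdot\E[\widehat\zeta_1^{\,s(2+\gamma)/2}]$, and optimizing/choosing $s\uparrow \beta$ yields the $t^{-\theta+\epsilon}$ bound; sharpening to exactly $t^{-\theta}$ uses the exact asymptotics of the Mellin transform near its pole $s=\beta$ together with a Tauberian theorem (Karamata).

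\textbf{Main obstacle.} The delicate point is the lower bound, i.e. showing $\Pb(\zeta_b\geq t)\geq \kappa_1 t^{-\theta}$: one must rule out that $\zeta_b$ is much smaller than $(X_{\zeta_b}-b)^{2/(2+\gamma)}$ with non-negligible probability. Concretely, a single very large jump of $X$ across many levels in a short time would do this, and controlling it requires a quantitative lower bound on the time an excursion of $Y$ with a given $X$-increment must take — this is where the scaling structure and the integrability (both directions) of the normalized excursion durations are essential, and where one must be a little careful because the number of intervening blocks is itself random. I would handle this by a ``one big block'' argument: with probability $\asymp \lambda^{-\beta}$ the first block already overshoots to size $\gtrsim\lambda$, and conditionally on that the corresponding elapsed time is $\gtrsim \lambda^{2/(2+\gamma)}$ with probability bounded below (by scaling and the positivity of the normalized duration), giving $\Pb(\zeta_b\geq t)\gtrsim \Pb(X_{\zeta_1}-1\gtrsim t^{1+\gamma/2})\gtrsim t^{-\theta}$. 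The upper bound is comparatively soft once the fractional-moment bound from Proposition~\ref{prop:Xzeta} is in hand, modulo the Tauberian step to get the exact power rather than a power-minus-epsilon.
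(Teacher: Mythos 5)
Your lower bound is essentially the paper's argument. The paper uses the identity in law $\zeta_b = T_b + Y_{T_b}^2\,\tau_1$, with $\tau_1$ an independent copy of $\sigma_0$ under $\Pb_{(0,1)}$, so that $\Pb(\zeta_b\geq t)\geq \Pb(Y_{T_b}^2\,\tau_1\geq t)$, and then combines the tail $\Pb(Y_{T_b}^2\geq z)\sim \kappa z^{-\theta}$ (converse mapping applied to (\ref{eq:MelYT})) with $\Pb(\tau_1\geq t)\sim c\, t^{\frac{\delta}{2}-1}$ and the inequality $\theta<1-\frac{\delta}{2}$ to identify the dominant factor in the product. Your ``one big block'' is exactly this last excursion straddling $T_b$; to make it rigorous you still need the quantitative tail of $Y_{T_b}$, or equivalently a uniform lower bound on the conditional probability that the excursion duration exceeds $\lambda^{2/(2+\gamma)}$ given an overshoot of order $\lambda$, but both follow from Lemma \ref{lem:Xs0} and (\ref{eq:XYX}), so this half is sound. (As an aside, your multiplicative random walk of levels $b_{k+1}=X_{\zeta_b^{(k)}}$ is degenerate: at time $\zeta_b^{(k)}$ the process already sits at $(b_{k+1},0)$, so the next overshoot vanishes by (\ref{eq:Xzetab}); fortunately you never actually use this decomposition.)

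The genuine gap is the upper bound. There is no identity in law of the form $\zeta_b=(X_{\zeta_b}-b)^{2/(2+\gamma)}\cdot\widehat{\zeta}$ with $\widehat{\zeta}$ independent: the problematic part of $\zeta_b$ is $T_b$ itself, which is not a function of the overshoot times an independent scaling factor, and nothing in Proposition \ref{prop:Xzeta} controls it. Consequently your Markov-inequality step is unjustified (even the finiteness of $\E[\zeta_b^{s}]$ for $s<\theta$ is not established), and at best it would yield $t^{-\theta+\epsilon}$. The proposed sharpening via Karamata then has nothing to act on: you hold the Mellin transform of the \emph{spatial} overshoot, not a Laplace or Mellin transform of the \emph{time} $\zeta_b$, and converting the former into the latter is precisely the missing content. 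The paper closes this by a genuinely different and more delicate route, following Profeta--Simon: it introduces $H(t)=\int_t^{\infty}\bigl(\E[(X_u-b)_+^{-r}\Un_{\{Y_u\leq0\}}]-\E[\E_{(X_{\zeta_b},0)}[(X_u-b)_+^{-r}\Un_{\{Y_u\leq0\}}]]\bigr)du$, shows by Laplace inversion that $H(t)=\E[\int_0^t\Un_{\{\zeta_b>t-u\}}\E_{(X_{\zeta_b},0)}[(X_u-b)_+^{-r}\Un_{\{Y_u\leq0\}}]\,du]$, computes the Mellin transform of $H'$ from Proposition \ref{prop:Xzeta} and Lemma \ref{lem:AA}, locates its first (simple) pole at $s=r+\frac{2\theta}{2+\gamma}$ to obtain $H(t)\sim\kappa\, t^{1-\frac{2+\gamma}{2}r-\theta}$, and finally lower-bounds the integrand on the event $\{\zeta_b>t,\ X_{\zeta_b}-b\leq t^{1+\gamma/2}\}$ to extract $k\,t^{\theta}\Pb(\zeta_b>t)\leq t^{\frac{2+\gamma}{2}r+\theta-1}H(t)+k\,t^{\theta}\Pb(X_{\zeta_b}-b\geq t^{1+\gamma/2})$, the last term being of order $t^{-\theta}\cdot t^{\theta}$ by (\ref{eq:Xzetab}). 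You would need to supply an argument of this type, or an equivalent bridge from spatial to temporal asymptotics, to close the upper bound.
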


\begin{proof}
The lower bound is easy to obtain. Indeed, from the Markov property and the scaling property, we have 
$$\zeta_b \= T_b + Y_{T_b}^2 \,\tau_1$$
where on the right hand-side, $\tau_1$ is independent from the pair $(T_b, Y_{T_b})$ and has the same law as $\sigma_0$ under $\Pb_{(0,1)}$, which is given (see \cite[Equation (13)]{GoYo}) by
\begin{equation}\label{eq:sigma0}
\Pb_{(0,1)}\left(\sigma_0\in dt \right) = \frac{2^{\frac{\delta}{2}-1}}{\Gamma\left(1-\frac{\delta}{2}\right)} t^{\frac{\delta}{2}-2} e^{-\frac{1}{2t}}dt.
\end{equation}
This yields the lower bound 
\begin{equation}\label{eq:mintau}
\Pb(\zeta_b \geq t)\; \geq \;\Pb(Y_{T_b}^2  \, \tau_1 \geq t).\end{equation}
From (\ref{eq:sigma0}),  the asymptotics of $\tau_1$ equals 
\begin{equation}\label{eq:asymps0}
\Pb(\tau_1\geq t) = \Pb_{(0,1)}(\sigma_0\geq t) \equi_{t\rightarrow +\infty}  \frac{2^{\frac{\delta}{2}-1}}{\Gamma\left(2-\frac{\delta}{2}\right)} t^{\frac{\delta}{2}-1}
\end{equation}
while, using  (\ref{eq:MelYT}) and the converse mapping for Mellin transform (see \cite[Theorem 4]{FGD}), that of $Y_{T_b}^2$ equals  :
$$
 \Pb(Y_{T_b}^2 \geq z) \equi_{z\rightarrow +\infty}  \kappa z^{-\theta}$$
for some constant $\kappa>0$. Finally, since $\theta< 1-\frac{\delta}{2}$, the leading term in the product on the right hand-side of (\ref{eq:mintau}) is $Y_{T_b}^2$, and the lower bound follows by applying Lemma 2.2 in \cite{PSAlea}.\\

\noindent
The upper bound is much more involved, and we shall follow the idea of Profeta-Simon \cite{PSPers}. Applying the Markov property and Fubini's theorem, we have for $\lambda>0$ and taking $r\in \left(0,\frac{\delta}{2+\gamma}\right)$:
$$\int_0^{\infty} e^{-\lambda t}\,\E\left[(X_t-b)_+^{-r}\Un_{\{Y_t\leq0\}}\right]dt
\;=\;\E\left[e^{-\lambda \zeta_b}  \int_0^{\infty} e^{-\lambda t}\,\E_{(X_{\zeta_b},0)}\left[(X_t-b)_{+}^{-r}\Un_{\{Y_t\leq0\}}\right]dt \right].$$
Integrating by parts, we then deduce 
\begin{align*}
&\lambda \int_0^{\infty} e^{-\lambda t}\int_t^{\infty} \left(\E\left[ (X_u-b)_+^{-r}\Un_{\{Y_u\leq0\}}\right]- \E\left[ \E_{(X_{\zeta_b,0})}\left[ (X_u-b)_+^{-r}\Un_{\{Y_u\leq0\}}\right] \right] \right)du\, dt\\
&\qquad= \E\left[  (1-e^{-\lambda \zeta_b})\int_0^{\infty} e^{-\lambda t}\E_{(X_{\zeta_b,0})}\left[ (X_t-b)_+^{-r}\Un_{\{Y_t\leq0\}}\right] dt \right]\\
&\qquad =\E\left[ \int_0^{\infty} \lambda \,e^{-\lambda t}\left( \int_0^t \Un_{\{\zeta_b>t-u\}} \E_{(X_{\zeta_b,0})}\left[ (X_u-b)_+^{-r}\Un_{\{Y_u\leq0\}}\right] du\right) dt \right].
\end{align*}
Inverting the Laplace transforms shows that
\begin{equation}\label{eq:invLap}
\E\left[ \int_0^t \Un_{\{\zeta_b>t-u\}}\E_{(X_{\zeta_b,0})}\left[ (X_u-b)_+^{-r}\Un_{\{Y_u\leq0\}}\right]  du \right] \; = \; H(t)
\end{equation}
with the notation
\begin{equation}
\label{Hxy}
H(t)\;=\;\int_t^{+\infty} \left(\E\left[ (X_u-b)_+^{-r}\Un_{\{Y_u\leq0\}}\right]- \E\left[ \E_{(X_{\zeta_b,0})}\left[ (X_u-b)_+^{-r}\Un_{\{Y_u\leq0\}}\right] \right] \right)du, \qquad t >0.
\end{equation}
To get the asymptotics of $H$, we shall compute the Mellin transform of its derivative. Proceeding as for (\ref{eq:Markovzeta}) and applying Proposition \ref{prop:Xzeta}, we have for $0<s<r$:
$$
\int_0^{+\infty} t^{\frac{2+\gamma}{2}s-1} H^\prime(t) dt =\left(1+\frac{\gamma}{2}\right)\left( E_1 - E_2\right)$$
where
$$E_1 = \Gamma(r-s)b^{s -r} \E\left[X_1^{-s}\Un_{\{X_1>0,\, Y_1\leq0\}}\right] \left(\frac{\Gamma(1-r)}{\Gamma(1-s)} -   \frac{\sin\left(\pi \beta\right)}{\sin\left(\pi(\beta- s+r)\right)} \frac{\Gamma(s)}{\Gamma(r)} \right) $$
and
$$E_2 = b^{s -r}  \frac{\sin\left(\pi \beta\right)}{\sin\left(\pi(\beta- s+r)\right)}\E\left[|X_1|^{-s}\Un_{\{X_1<0,\, Y_1\leq0\}}\right] B\left(s, 1-r\right). $$
This identity may be extended by analytic continuation to $s\in (0, r + \frac{2\theta}{2+\gamma})$, using that $r+  \frac{2\theta}{2+\gamma} < \frac{2}{2+\gamma}$ to ensure that the negative moments of $X_1$ are finite from Lemma \ref{lem:AA}. Since the pole  at $r +\frac{2\theta}{2+\gamma}$ is simple, we deduce from the converse mapping for Mellin transform, upon integration, that  
\begin{equation}\label{eq:asympH}
H(t) \equi_{t\rightarrow +\infty}  \kappa\, t^{1-\frac{2+\gamma}{2}r - \theta}    
\end{equation}
for some constant $\kappa>0$. Then, going back to  (\ref{eq:invLap}), applying the Markov property and denoting as before $(\widehat{X}, \widehat{Y})$ an independent copy of $(X,Y)$ started at $(0,0)$, we have 
\begin{align*}
\notag t^{\frac{2+\gamma}{2}r+ \theta-1} H (t) & \geq \; t^{\frac{2+\gamma}{2}r+ \theta-1}\,\E\left[ \Un_{\{\zeta_b>t\}}\int_0^t  \E_{(X_{\zeta_b},0)}\left[ (X_u-b)_+^{-r}\Un_{\{Y_u\leq0\}}\right]  du\right] \\
\notag & \geq \;t^{\frac{2+\gamma}{2}r+ \theta}\,\E\left[ \Un_{\{\zeta_b>t\}}\int_0^1  (X_{\zeta_b}-b + (tv)^{1+\frac{\gamma}{2}} \widehat{X}_1)_+^{-r}  \Un_{\{\widehat{X}_1>0, \, \widehat{Y}_1\leq 0\}}  \Un_{\{X_{\zeta_b}-b \leq  t^{1+\frac{\gamma}{2}}\}}dv \right] \\ 
&\geq  \;t^{\theta}\, \Pb\left(\zeta_b>t ,X_{\zeta_b}-b \leq  t^{1+\frac{\gamma}{2}} \right)\underbrace{\E\left[\int_0^1  ( 1 + v^{1+\frac{\gamma}{2}} \widehat{X}_1)^{-r}  \Un_{\{\widehat{X}_1>0, \, \widehat{Y}_1\leq 0\}} dv \right]}_{=k}\\
&=   k t^{\theta} \left( \Pb\left(\zeta_b>t \right) -  \Pb\left(\zeta_b>t, \,X_{\zeta_b}-b \geq  t^{1+\frac{\gamma}{2}} \right)\right).
\end{align*}
We finally obtain 
$$ t^{\frac{2+\gamma}{2}r+ \theta-1} H (t) + k t^{\theta}\, \Pb\left(X_{\zeta_b}-b \geq  t^{1+\frac{\gamma}{2}} \right) \geq  k t^{\theta}\, \Pb\left(\zeta_b>t \right)$$
and the upper bound of Lemma \ref{lem:zetasup} follows from (\ref{eq:asympH}) and (\ref{eq:Xzetab}).

%
%
%

\end{proof}

\subsection{Proof of Theorem \ref{theo:A}}\label{sec:2.4}
We first prove that $(x,y)\mapsto h(-x,y)$ is harmonic for the killed process. Indeed, observe first that from Proposition \ref{prop:Xzeta}, there exists a constant $\kappa$ independent from $(x,y)$ such that for $\{x<0\text{ and }y\in \R\}$ or $\{x=0 \text{ and }y<0\}$ :
$$h(-x,y) = \kappa \lim_{s\rightarrow \theta} \left( s-\theta \right) \E_{(x,y)}\left[ Y_{T_0}^{2s}\right]. $$
Then, applying the Markov property, we deduce that for any $t>0$, 
\begin{align*}
\E_{(x,y)}\left[h(-X_t, Y_t) \Un_{\{t< T_0\}}\right] &= \kappa\lim_{s\rightarrow\theta} (s-\theta) \E_{(x,y)}\left[ \E_{(X_t,Y_t)}\left[ Y_{T_0}^{2s}\right] \Un_{\{t< T_0\}}\right] \\
&= \kappa\lim_{s\rightarrow\theta} (s-\theta) \E_{(x,y)}\left[ Y_{T_0}^{2s}\Un_{\{t< T_0\}}\right] \\
&=h(-x,y) - \kappa\lim_{s\rightarrow\theta} (s-\theta)\E_{(x,y)}\left[ Y_{T_0}^{2s}\Un_{\{t\geq T_0\}}\right] \\
&=h(-x,y)
\end{align*}
since 
$$0\leq \E_{(x,y)}\left[ Y_{T_0}^{2s}\Un_{\{t\geq T_0\}}\right]\leq  \E_{(x,y)}\left[ \left(1\wedge Y_{T_0}\right)^{2\theta}\Un_{\{t\geq T_0\}}\right]  \leq  \E_{(x,y)}\left[ \left(1\wedge\sup_{u\leq t }Y_{u}\right)^{2\theta}\right]<+\infty.$$
This allows to define a new probability measure by the absolute continuity formula 
$$\Q_{(x,y)|\F_t} = \frac{h(-X_t,Y_t)}{h(x,y)} \Un_{\{t<T_0\}}\centerdot \Pb_{(x,y)|\F_t}$$
where  $(\F_t)_{t\geq0}$ denotes the natural filtration of the process $(X,Y)$. The measure $\Q$ is in fact the law $(X,Y)$ where $X$ is conditioned to remain negative. By translation and scaling, we deduce using (\ref{eq:hscale}) that
\begin{align*}
\Pb_{(x,0)}(T_b>t) &= (b-x)^{\frac{2\theta}{2+\gamma}}\Q_{(0,0)}\left[\frac{1}{h(b-x-X_t, Y_t)}  \right]\\ 
&= (b-x)^{\frac{2\theta}{2+\gamma}} t^{-\theta} \Q_{\left(0,0\right)}\left[\frac{1}{h\left( \frac{b-x}{t^{1+\frac{\gamma}{2}}}-X_1, Y_1\right)}  \right]. 
\end{align*}
Since $h$ is increasing in its first variable, we deduce from the monotone convergence theorem that   
$$\lim_{t\rightarrow+\infty}t^{\theta}\Pb_{(x,0)}(T_b>t) =(b-x)^{\frac{2\theta}{2+\gamma}} \Q_{\left(0,0\right)}\left[\frac{1}{h(-X_1, Y_1)}  \right]$$
and this last quantity is finite since from Lemma \ref{lem:zetasup}
$$t^{\theta}\Pb_{(x,0)}(T_b>t) \leq t^{\theta}\Pb_{(x,0)}(\zeta_b>t) \leq \kappa_2\qquad \text{ as } t\rightarrow +\infty.$$
Finally, when starting from $\{x<b \text{ and } y\neq0\}$ or $\{x=b\text{ and } y<0\}$, we have applying the Markov property
\begin{align*}
t^{\theta} \Pb_{(x,y)}(T_b>t) &=  t^{\theta} \Pb_{(x,y)}(T_b>t ,\, \sigma_0\leq T_b) +  t^{\theta}\Pb_{(x,y)}(T_b>t ,\, \sigma_0> T_b) \\
 &=   t^{\theta}\E_{(x,y)}\left[Ê\Pb_{(X_{\sigma_0},0)}\left(s +  T_b >t \right)_{| s=\sigma_0}  \Un_{\{\sigma_0\leq T_b\}} \Un_{\{X_{\sigma_0}\leq b\}}\right]+t^{\theta} \Pb_{(x,y)}\left( \sigma_0> T_b>t\right)\\
 &\xrightarrow[t\rightarrow +\infty]{} \E_{(x,y)}\left[(b-X_{\sigma_0})_+^{\frac{2\theta}{2+\gamma}}\right]  \Q_{\left(0,0\right)}\left[\frac{1}{h(-X_1, Y_1)}  \right],
\end{align*}
since from (\ref{eq:asymps0}), recalling that $\theta < 1-\frac{\delta}{2}$, 
$$\lim_{t\rightarrow+\infty} t^{\theta} \Pb_{(x,y)}\left( \sigma_0>t\right)=0.$$
\qed

\section{The exit time problem}\label{sec:3}

We now prove Theorems \ref{theo:B}, \ref{theo:C} and Corollary \ref{cor:D}. Recall for $a\leq 0 \leq b$ the definitions 
$$\zeta_a = \inf\{t>0, \, X_t\leq a \text{ and }Y_t=0\}, \qquad \zeta_b = \inf\{t>0, \, X_t\geq b \text{ and }Y_t=0\}$$
and
$$\zeta_{ab} =\zeta_a\wedge \zeta_b = \inf\{t>0,\, X_t \neq(a,b) \text{ and }Y_t=0\}.$$
We start by studying the random variable $X_{\zeta_{ab}}$. We shall proceed as in Section \ref{sec:2} but write this time a system of equations since $X_{\zeta_{ab}}$ take values in $\R\backslash(a,b)$.

\subsection{The distribution of $X_{\zeta_{ab}}$} Let $x\in(a,b)$.
Applying the strong Markov property and using the continuity of $V_\gamma(Y)$, we deduce that 
\begin{align*}
\Pb_{(x,0)}\left( X_t \geq b+z,\, Y_t\leq 0\right) &= \Pb_{(x,0)}\left( X_{\zeta_b} + \int_0^{t-\zeta_b}V_\gamma(\widehat{Y}_{u})du\geq b+z,\, Y_t\leq 0,\, \zeta_{ab}\leq t, \zeta_b< \zeta_a  \right) \\
&\qquad +  \Pb_{(x,0)}\left( X_{\zeta_a} + \int_0^{t-\zeta_a}V_\gamma(\widehat{Y}_{u})du\geq b+z,\, Y_t\leq 0,\, \zeta_{ab}\leq t, \zeta_a< \zeta_b  \right)
\end{align*}
where $(\widehat{X}, \widehat{Y})$ is an independent copy of $(X,Y)$ started from $(0,0)$.
Integrating in $z$ and $t$, we obtain for $r\in (\frac{2}{2+\gamma}, 1)$ :
\begin{multline*}
\int_0^{+\infty} \E_{(x,0)}\left[ (X_t-b)^{-r}_+ \Un_{\{Y_t\leq 0\}}\right] dt= \int_0^{+\infty} \E_{(x,0)}\left[ (X_{\zeta_b}-b  + \widehat{X}_t)^{-r}_+ \Un_{\{\widehat{Y}_t\leq 0\}}\Un_{\{\zeta_b< \zeta_a \}} \right] dt\\ +  \int_0^{+\infty} \E_{(x,0)}\left[ (X_{\zeta_a}-b  + \widehat{X}_t)^{-r}_+ \Un_{\{\widehat{Y}_t\leq 0\}}\Un_{\{\zeta_a< \zeta_b \}} \right] dt 
\end{multline*}
From (\ref{eq:A+-}), the left hand-side equals 
$$  \int_0^{+\infty}   \E\left[ (x+t^{1+\frac{\gamma}{2}}X_1 -b)^{-r}_+ \Un_{\{Y_1\leq 0\}}\right] dt=\frac{2M_+}{2+\gamma}  (b-x)^{\frac{2}{2+\gamma}-r}  B\left(r-\frac{2}{2+\gamma}, 1-r  \right).$$
Separating the cases $\widehat{X}_1>0$ and $\widehat{X}_1<0$ and proceeding as in (\ref{eq:A+}) and (\ref{eq:A-}), we obtain 
\begin{multline*}
 \int_0^{+\infty} \E\left[ (X_{\zeta_b}-b  +\widehat{X}_t)^{-r}_+ \Un_{\{\widehat{Y}_t\leq 0\}}\Un_{\{\zeta_b< \zeta_a \}} \right] dt \\
=  \E\left[(X_{\zeta_{ab}}-b)^{\frac{2}{2+\gamma}-r}\Un_{\{X_{\zeta_{ab}>0} \}}  \right] \left( \frac{2M_{+}}{2+\gamma}   B\left(\frac{2}{2+\gamma}, r - \frac{2}{2+\gamma} \right) +  \frac{2M_{-}}{2+\gamma}   B\left(\frac{2}{2+\gamma},   1-r \right)\right)
\end{multline*}
and
\begin{multline*}
  \int_0^{+\infty} \E\left[ (X_{\zeta_a}-b  + \widehat{X}_t)^{-r}_+ \Un_{\{\widehat{Y}_t\leq 0\}}\Un_{\{\zeta_a< \zeta_b \}} \right] dt \\
=\E\left[(b-X_{\zeta_{ab}})^{\frac{2}{2+\gamma}-r}\Un_{\{X_{\zeta_{ab}}< 0 \}}  \right] \frac{2M_{+}}{2+\gamma}   B\left(r-\frac{2}{2+\gamma}, 1-r  \right)
\end{multline*}
where we have used that $\{\zeta_b<\zeta_a\} \Longleftrightarrow \{X_{\zeta_{ab}}>0\}$ a.s.
Setting $s=\frac{2}{2+\gamma}-r$ and using Lemma \ref{lem:AA}, we thus obtain the equation :
$$ \E\left[\left(X_{\zeta_{ab}}-b\right)^{s}\Un_{\{X_{\zeta_{ab}>0} \}}\right]\frac{\sin(\pi (\beta-s) )}{\sin(\pi \beta)} +   \E\left[\left(b-X_{\zeta_{ab}}\right)^{s}\Un_{\{X_{\zeta_{ab}<0} \}}\right]   = (b-x)^s.$$
Similarly, we obtain by symmetrical arguments
$$ \E\left[\left(a-X_{\zeta_{ab}}\right)^{s}\Un_{\{X_{\zeta_{ab}<0} \}}\right] \frac{\sin(\pi (\alpha-s) )}{\sin(\pi \alpha)}+   \E\left[\left(X_{\zeta_{ab}}-a\right)^{s}\Un_{\{X_{\zeta_{ab}>0} \}}\right]   = (x-a)^s.$$
Notice that in this system, the dependence in the parameters $\delta, \eta, \gamma$ and $c$ only appears through $\alpha$ and $\beta$. 
\begin{lemma}\label{lem:lawXzetaab} We set $J=(\max(\alpha,\beta)-1, \min(\alpha,\beta))$. The system of integral equations :
\begin{equation}\label{eq:sys}
\left\{\begin{array}{l}
\displaystyle \frac{\sin(\pi (\beta-s) )}{\sin(\pi \beta)} \int_0^{+\infty} z^s \mu_+(dz) + \int_0^{+\infty} (b-a+z)^s \mu_-(dz) = (b-x)^s\\
\vspace{-.3cm}\\
\displaystyle \frac{\sin(\pi (\alpha-s) )}{\sin(\pi \alpha)} \int_0^{+\infty} z^s \mu_-(dz) + \int_0^{+\infty} (b-a+z)^s \mu_+(dz) = (x-a)^s\\
\end{array}\right.,
\qquad s\in J,
\end{equation}
where $(\mu_+, \mu_-)$ are two positive measures, admits an unique solution which is given by :
$$\rho_+(dz) =\Pb_{(x,0)}(X_{\zeta_{ab}}-b\in dz) = \frac{\sin(\pi \beta) }{\pi} \frac{(x-a)^\alpha (b-x)^\beta}{b-x+z}  \frac{ z^{-\beta}}{(b-a +z)^{\alpha}} dz$$
and
$$\rho_-(dz) =\Pb_{(x,0)}(a-X_{\zeta_{ab}}\in dz)=  \frac{\sin(\pi \alpha) }{\pi} \frac{(x-a)^\alpha (b-x)^\beta}{x-a+z}  \frac{ z^{-\alpha}}{(b-a +z)^{\beta}} dz.$$
%
%
%
\end{lemma}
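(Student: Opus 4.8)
The plan is to prove the two assertions of the lemma separately: first that the explicit pair $(\rho_+,\rho_-)$ solves the system \eqref{eq:sys}, and then that \eqref{eq:sys} has \emph{at most} one solution among pairs of positive measures. Since the computation carried out just before the lemma shows that the pair formed by the law of $X_{\zeta_{ab}}-b$ (on $\{X_{\zeta_{ab}}>0\}$) and the law of $a-X_{\zeta_{ab}}$ (on $\{X_{\zeta_{ab}}<0\}$) is a solution of \eqref{eq:sys}, these two steps together identify those laws with $\rho_+$ and $\rho_-$ and yield the stated densities.

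\emph{Step 1 (the explicit pair is a solution).} I would compute the four transforms $\int_0^{+\infty}z^s\rho_\pm(dz)$ and $\int_0^{+\infty}(b-a+z)^s\rho_\pm(dz)$. Each is of the form $\mathrm{const}\times\int_0^{+\infty}z^{\sigma}(p+z)^{-1}(b-a+z)^{-\tau}\,dz$ with $p\in\{b-x,\,x-a\}$, and the substitution $z=(b-a)t/(1-t)$ maps $(0,+\infty)$ onto $(0,1)$ and turns it into an Euler integral
\begin{equation*}
\int_0^1 t^{a_0-1}(1-t)^{c_0-a_0-1}(1-wt)^{-1}\,dt=B(a_0,c_0-a_0)\;{}_2F_1(1,a_0;c_0;w);
\end{equation*}
for instance one gets
\begin{equation*}
\int_0^{+\infty}z^s\rho_+(dz)=\frac{\sin(\pi\beta)}{\pi}\,\frac{(x-a)^{\alpha}(b-a)^{\,s-\nu+1}}{(b-x)^{1-\beta}}\,\frac{\Gamma(s-\beta+1)\Gamma(\nu-s)}{\Gamma(\alpha+1)}\;{}_2F_1\!\left(1,\,s-\beta+1;\,\alpha+1;\,-\tfrac{x-a}{b-x}\right),
\end{equation*}
and three analogous expressions obtained by interchanging $\alpha\leftrightarrow\beta$, $(x-a)\leftrightarrow(b-x)$ and $(s-\beta)\leftrightarrow(-\alpha)$ in the appropriate slots. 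Plugging these into the first line of \eqref{eq:sys}, the reflection formula $\Gamma(u)\Gamma(1-u)=\pi/\sin(\pi u)$ absorbs the factor $\sin(\pi(\beta-s))/\sin(\pi\beta)$ into the Gamma quotients, and — because the two hypergeometric arguments $-\tfrac{x-a}{b-x}$ and $-\tfrac{b-x}{x-a}$ are reciprocals — a linear transformation formula for ${}_2F_1$ (the one connecting the point $w$ with the point $1/w$) collapses the left-hand side to $(b-x)^s$; the second line of \eqref{eq:sys} then follows by the symmetry $\alpha\leftrightarrow\beta$, $(x-a)\leftrightarrow(b-x)$. As consistency checks, at $s=0\in J$ the equations reduce to $\rho_+((0,+\infty))+\rho_-((0,+\infty))=1$, and letting $\alpha\to0$ one recovers \eqref{eq:Xzetab}.

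\emph{Step 2 (uniqueness).} The factor $\sin(\pi(\beta-s))/\sin(\pi\beta)$ is the reciprocal of the Mellin transform $w^s\sin(\pi\beta)/\sin(\pi(\beta-s))$ of the probability density $k^{w}_\beta(z)=\tfrac{\sin(\pi\beta)}{\pi}\tfrac{w^{\beta}z^{-\beta}}{w+z}$ occurring in \eqref{eq:Xzetab}. Dividing the first equation of \eqref{eq:sys} by it — i.e. convolving back — turns \eqref{eq:sys} into the equivalent pair of \emph{identities of measures}
\begin{equation*}
\mu_+(dz)=k^{\,b-x}_\beta(z)\,dz-\Bigl(\int_0^{+\infty}k^{\,b-a+w}_\beta(z)\,\mu_-(dw)\Bigr)dz,\qquad \mu_-(dz)=k^{\,x-a}_\alpha(z)\,dz-\Bigl(\int_0^{+\infty}k^{\,b-a+w}_\alpha(z)\,\mu_+(dw)\Bigr)dz,
\end{equation*}
valid once one checks that the Mellin transforms of both sides converge on $J$ (so that injectivity of the Mellin transform applies). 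Writing $\mathcal K_\beta\mu_-$ for the parenthesised measure and $Q_\beta(w,dz)=k^{\,b-a+w}_\beta(z)\,dz$, a genuine Markov kernel on $(0,+\infty)$, one has $\mathcal K_\beta\mu_-=\mu_-Q_\beta$ and likewise for $\alpha$; eliminating $\mu_-$ gives $\mu_+=\bigl(k^{\,b-x}_\beta\,dz-\mathcal K_\beta k^{\,x-a}_\alpha\,dz\bigr)+\mu_+P$ with $P:=Q_\alpha Q_\beta$. If $(\mu_+,\mu_-)$ and $(\mu_+',\mu_-')$ are two solutions, $\nu:=\mu_+-\mu_+'$ is a finite signed measure with $\nu=\nu P$, hence $\nu=\nu P^{\,n}$ for all $n$. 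Under $Q_\beta$, writing $z=(b-a+w)u$, the variable $u$ has the fixed law $\tfrac{\sin(\pi\beta)}{\pi}\tfrac{u^{-\beta}}{1+u}\,du$, for which $\E[\log u]=\pi\cot(\pi\beta)$; since $b-a+w\ge w$, the $P$-chain satisfies in logarithmic coordinates $\log W_{n+1}\ge\log W_n+\log u^{(n)}_\alpha+\log u^{(n)}_\beta$ with i.i.d.\ increments of mean $\pi(\cot(\pi\alpha)+\cot(\pi\beta))>0$ (using $\alpha,\beta\in(0,\nu)\subset(0,\tfrac12)$). By the strong law of large numbers $W_n\to+\infty$ a.s., so $P^{\,n}(w,K)\to0$ for every compact $K\subset(0,+\infty)$; hence $|\nu|(K)=|\nu P^{\,n}|(K)\le(|\nu|P^{\,n})(K)=\int P^{\,n}(w,K)\,|\nu|(dw)\to0$ by dominated convergence, so $|\nu|(K)=0$ for all such $K$, whence $\nu=0$ and similarly $\mu_-=\mu_-'$.

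I expect Step 2 — specifically establishing the transience of $P$, equivalently that $P$ admits no finite invariant measure — to be the delicate point; Step 1 is a long but routine manipulation of Euler integrals together with the reflection and linear-transformation formulas for Gauss' hypergeometric function.
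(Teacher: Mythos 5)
Your proposal is correct, and it splits naturally into a half that matches the paper and a half that does not. For the verification that $(\rho_+,\rho_-)$ solves (\ref{eq:sys}), your route is exactly the paper's: the paper evaluates the same Euler integrals via the representation (\ref{eq:Gauss}), obtaining precisely your intermediate formula for $\int_0^{+\infty}z^s\rho_+(dz)$ (compare (\ref{eq:F1}), with $\xi=\tfrac{b-x}{a-x}$), and then collapses the sum using the Gauss transformation connecting ${}_2F_1$ at the arguments $\xi$ and $1/\xi$ together with the reflection formula for $\Gamma$. For uniqueness, the paper also begins as you do, by inverting the factor $\sin(\pi(\beta-s))/\sin(\pi\beta)$ against the Cauchy--Beta kernel of (\ref{eq:Xzetab}) to turn the system into the pair of kernel identities you write; but it then finishes with a two-line contraction argument: integrating the difference of two solutions against the weight $z^{-\varepsilon}$ and noting that the composite operator has weighted-$L^1$ norm at most $\frac{\sin(\pi\alpha)\sin(\pi\beta)}{\sin(\pi(\alpha+\varepsilon))\sin(\pi(\beta+\varepsilon))}<1$ for $\varepsilon>0$ small, using $\alpha,\beta<\nu<\tfrac12$ and the fact that $-\varepsilon\in J$ guarantees finiteness of the weighted norms. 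Your alternative --- transience of the composed kernel $P=Q_\alpha Q_\beta$, obtained by dominating $\log W_n$ by a random walk with drift $\pi(\cot(\pi\alpha)+\cot(\pi\beta))>0$, so that a finite signed measure with $\nu=\nu P^n$ must vanish on compacts --- is also valid, and it is in fact the probabilistic shadow of the same inequality: since $\E[U^{-\varepsilon}]=\sin(\pi\beta)/\sin(\pi(\beta+\varepsilon))$, the derivative of the contraction factor at $\varepsilon=0$ equals $-\pi(\cot(\pi\alpha)+\cot(\pi\beta))$, so strict contraction for small $\varepsilon$ and positivity of the drift are equivalent conditions, both encoding $\alpha,\beta<\tfrac12$. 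The paper's version is shorter and avoids the strong law of large numbers and the inner-regularity step $|\nu|(K)=0\Rightarrow\nu=0$; yours makes the mechanism (escape of mass to $+\infty$ under iteration of the kernels) more transparent and dispenses with choosing a weight.
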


\begin{proof}
We first check that the pair $(\rho_+, \rho_-)$ is solution of (\ref{eq:sys}). Recall the integral representation of the hypergeometric function \cite[p.317]{GrRy}:
\begin{equation}\label{eq:Gauss}
\int_0^{+\infty} \frac{z^{r-1}}{(u+z)^{p} (v+z)^{q} }dz = u^{-p}\, v^{r-q} B(r, p+q-r)  \pFq{2}{1}{p\quad r}{p+q}{1-\frac{v}{u}}
\end{equation}
for $u,v,r>0$ and  $p+q>r$.
On the one hand, using (\ref{eq:Gauss}) and setting $ \xi = \frac{b-x}{a-x}<0$, we have~:
\begin{multline}\label{eq:F1}
  \frac{\sin(\pi(\beta-s))}{\sin(\pi\beta)(b-x)^s} \int_0^{+\infty} z^s \rho_+(dz)  \\=
 \frac{\sin(\pi(\beta-s))}{\pi} (-\xi)^{-\alpha} \left(\frac{b-a}{b-x}\right)^{1+s-\nu} \frac{\Gamma(s+1-\beta)\Gamma(\nu-s)}{\Gamma(1+\alpha)} \pFq{2}{1}{1\quad s-\beta+1}{1+\alpha}{\frac{1}{\xi}}.
  \end{multline} 
On the other hand, still from  (\ref{eq:Gauss}), 
\begin{multline}\label{eq:F2}
  \frac{1}{(b-x)^s} \int_0^{+\infty} (b-a+z)^s \rho_-(dz) \\
 =   \frac{\sin(\pi\alpha)}{\pi}  (-\xi)^{1-\alpha} \left(\frac{b-a}{b-x}\right)^{1+s-\nu}  \frac{\Gamma(1-\alpha)\Gamma(\nu-s)}{\Gamma(1+\beta-s)}  \pFq{2}{1}{1\quad 1-\alpha}{1+\beta-s}{\xi}.
 \end{multline}
Applying the Gauss transformation (see \cite[Section 9.132]{GrRy}) on (\ref{eq:F2}), we deduce that
\begin{align}
\notag&(-\xi) \frac{\Gamma(1-\alpha)}{\Gamma(1+\beta-s)}  \pFq{2}{1}{1\quad 1-\alpha}{1+\beta-s}{\xi} \\
\notag &\qquad= \frac{\Gamma(-\alpha)}{ \Gamma(\beta-s)}\pFq{2}{1}{1\quad s-\beta+1}{1+\alpha}{\frac{1}{\xi}} + \frac{\Gamma(1-\alpha)\Gamma(\alpha)}{\Gamma(\nu-s)} (-\xi)^{\alpha} \pFq{2}{1}{1-\alpha \quad 1+s-\nu}{1-\alpha}{\frac{1}{\xi}}\\
\label{eq:F3} &\qquad= \frac{\Gamma(-\alpha)}{ \Gamma(\beta-s)}\pFq{2}{1}{1\quad s-\beta+1}{1+\alpha}{\frac{1}{\xi}} + \frac{\pi}{\sin(\pi \alpha)} \frac{ (-\xi)^{\alpha} }{ \Gamma(\nu-s)}\left(\frac{b-a}{b-x}\right)^{\nu-s-1}.
 \end{align}
 The result now follows by adding (\ref{eq:F1}) and (\ref{eq:F2}), using (\ref{eq:F3}) and the complement formula for the Gamma function. This proves that the first equation of (\ref{eq:sys}) is satisfied and similar computations show that the second equation also holds.\\
 
 \noindent
  Now, assume that $(\mu_+, \mu_-)$ is another solution of (\ref{eq:sys}). Then, using (\ref{eq:Xzetab}) and inverting the first Mellin transform of each equation, we deduce that the measures $(\mu_+, \mu_-)$ admit densities. Keeping the same notation for the densities, we obtain by difference
\begin{equation*}
\left\{\begin{array}{l}
 \displaystyle \mu_{+}(z) - \rho_+(z) =  \frac{\sin(\pi\beta)}{\pi} \left(\frac{z}{b-x}\right)^{-\beta}  \int_0^{+\infty} \frac{(b-a+y)^\beta(\rho_-(y) - \mu_-(y)) }{(b-x)(b-a+y)+z }dy\\
\vspace{-.3cm}\\
  \displaystyle \mu_{-}(z) - \rho_-(z) = \frac{\sin(\pi\alpha)}{\pi} \left(\frac{z}{x-a}\right)^{-\alpha}     \int_0^{+\infty} \frac{(b-a+y)^\alpha (\rho_+(y)-\mu_+(y))}{(x-a)(b-a+y)+z} dy\\
\end{array}\right.,
\quad z>0.
\end{equation*}
Let $\varepsilon \in (0, \min(\alpha, \beta))$. Integrating against $z^{-\varepsilon}$ on $(0,+\infty)$, we obtain the inequalities :
\begin{equation*}
\left\{\begin{array}{l}
 \displaystyle \int_0^{+\infty} z^{-\varepsilon} |\mu_{+}(z) - \rho_+(z)| dz   \leq \frac{\sin(\pi \beta)}{\sin(\pi(\beta+\varepsilon))}  \int_0^{+\infty}y^{-\varepsilon}|\rho_-(y) - \mu_-(y)| dy \\
\vspace{-.3cm}\\
 \displaystyle \int_0^{+\infty} z^{-\varepsilon} |\mu_-(z)-\rho_{-}(z)| dz   \leq \frac{\sin(\pi \alpha)}{\sin(\pi(\alpha+\varepsilon))}  \int_0^{+\infty}y^{-\varepsilon}|\rho_+(y) - \mu_+(y)| dy \\\end{array}\right.,
\end{equation*}
hence
$$ \int_0^{+\infty} z^{-\varepsilon} |\mu_{+}(z) - \rho_+(z)| dz \leq  \frac{\sin(\pi \beta)}{\sin(\pi(\beta+\varepsilon))}\frac{\sin(\pi \alpha)}{\sin(\pi(\alpha+\varepsilon))}  \int_0^{+\infty}y^{-\varepsilon}|\rho_+(y) - \mu_+(y)| dy.$$
Recall finally that $\alpha$ and $\beta$ are positive and smaller than $\nu < \frac{1}{2}$. Therefore, we may take $\varepsilon$ small enough so that  $\displaystyle  \frac{\sin(\pi \beta)}{\sin(\pi(\beta+\varepsilon))}\frac{\sin(\pi \alpha)}{\sin(\pi(\alpha+\varepsilon))}<1$ and this implies that $\mu_+ = \rho_+$ a.s., and thus also $\mu_- = \rho_-$ a.s.
\end{proof}

\subsection{Proof of Theorem \ref{theo:B} when $y=0$}
To retrieve the distribution of $Y_{T_{ab}}$ observe that applying the Markov property and using Lemma \ref{lem:Xs0}, we have 
\begin{align*}
\rho_+(dz) = \Pb_{(x,0)}\left(X_{\zeta_{ab}}-b\in dz\right) &=  \E_{(x,0)}\left[\Pb_{(0, Y_{T_{ab}})}\left(X_{\sigma_0}\in dz\right) \Un_{\{Y_{T_{ab}}>0\}} \right]\\
&=
\frac{z^{-\nu-1}}{\Gamma\left(\nu\right)A^\nu}  \E_{(x,0)}\left[Y_{T_{ab}}^{2-\delta} \exp\left(- \frac{Y_{T_{ab}}^{2+\gamma} }{A z} \right) \Un_{\{Y_{T_{ab}}>0\}}\right] dz
\end{align*}
hence, the modified Laplace transform of $Y_{T_{ab}}$ is given by :
$$\E_{(x,0)}\left[Y_{T_{ab}}^{2-\delta} e^{- \frac{\lambda }{A} Y_{T_{ab}}^{2+\gamma}}\Un_{\{Y_{T_{ab}}>0\}}\right] =\frac{\sin(\pi \beta)}{\pi} \frac{A^\nu \Gamma(\nu)}{(\lambda(b-a) +1)^{\alpha}}\frac{(x-a)^\alpha (b-x)^\beta}{\lambda(b-x)+1}
 $$
 and similarly 
$$
\E_{(x,0)}\left[|Y_{T_{ab}}|^{2-\delta} e^{- \frac{ \lambda }{A}  |Y_{T_{ab}}|^{2+\gamma}}\Un_{\{Y_{T_{ab}}<0\}}\right] =c^{1-\alpha}  \frac{\sin(\pi \alpha)}{\pi} \frac{A^\nu\Gamma(\nu)}{\left(\lambda (b-a) +c\right)^{\beta}}\frac{(x-a)^\alpha (b-x)^\beta}{\lambda(x-a)+c}.
$$
In the Brownian case  (when $\delta=1$), these modified Laplace transforms are the exact same ones obtained by Lachal \cite{LacExit2}. 
Theorem \ref{theo:B} now follows by inverting these two transforms, using for instance \cite[p.238, Formula (8)]{Erd}.\\

\subsection{Proof of Theorem \ref{theo:B} for $y\neq0$}\label{sec:3.3}

We give the proof of the probability distribution of $Y_{T_{ab}}$ only for $z>0$, the case $z<0$ being similar. Assume first that $y>0$. We decompose
\begin{align*}
\notag \E_{(x,y)}\left[Y_{T_{ab}}^{s(2+\gamma)}\Un_{\{Y_{T_{ab}}>0\}}\right]&= \E_{(x,y)}\left[Y_{T_{ab}}^{s(2+\gamma)}\Un_{\{Y_{T_{ab}}>0, \, \sigma_0< T_{ab}\}}\right] + \E_{(x,y)}\left[Y_{T_{ab}}^{s(2+\gamma)}\Un_{\{Y_{T_{ab}}>0,\,  T_{ab}\leq \sigma_0\}}\right]\\
&=\E_{(x,y)}\left[ \E_{(X_{\sigma_0},0)}\left[Y_{T_{ab}}^{s(2+\gamma)}\Un_{\{Y_{T_{ab}}>0\}}\right] \Un_{\{X_{\sigma_0}<b\}}\right] + \E_{(x,y)}\left[Y_{T_{b}}^{s(2+\gamma)}\Un_{\{T_b\leq \sigma_0\}}\right].
\end{align*}
The first Mellin transform of the right hand-side may easily be inverted using Theorem \ref{theo:B} and the distribution of $X_{\sigma_0}$ given in Lemma \ref{lem:Xs0}. 
For the second one, notice that 
\begin{align*}
 \E_{(x,y)}\left[ Y_{T_b}^{s(2+\gamma)}\Un_{\{ T_b\leq \sigma_0\}}\right] &= \E_{(x,y)}\left[ Y_{T_b}^{s(2+\gamma)}\right] -  \E_{(x,y)}\left[ Y_{T_b}^{s(2+\gamma)}\Un_{\{\sigma_0 < T_b\}}\right]\\
 &=\E_{(x,y)}\left[ Y_{T_b}^{s(2+\gamma)}\right] - \E_{(x,y)}\left[   \E_{(X_{\sigma_0},0)}\left[Y_{T_b}^{s(2+\gamma)}\right] \Un_{\{X_{\sigma_0}< b\}}\right].
\end{align*}
Therefore, using (\ref{eq:MelYT}) and (\ref{eq:Xzeta}), this implies that 
\begin{equation*}\label{eq:MelYT0}
 \E_{(x,y)}\left[ Y_{T_b}^{s(2+\gamma)}\Un_{\{T_b\leq \sigma_0\}}\right] =A^{s} \frac{\Gamma(\nu)}{\Gamma\left(\nu-s  \right)}  \E_{(x,y)}\left[(X_{\sigma_0}-b)_+^{s}\right],
  \end{equation*}
and we deduce from Lemma \ref{lem:Xs0} that 
\begin{align*}
 \E_{(x,y)}[Y_{T_{b}}^{s(2+\gamma)}\Un_{\{T_b\leq \sigma_0\}}] & =\frac{A^{s-\nu} y^{2-\delta}}{\Gamma(\nu-s)} \int_{b-x}^{+\infty} (z+x-b)^s z^{-\nu-1} e^{-\frac{y^{2+\gamma}}{Az}}dz\\
\notag &= y^{2-\delta} \frac{\Gamma(s+1)}{\Gamma(\nu+1)} (A(b-x))^{s-\nu} \pFq{1}{1}{\nu-s}{\nu+1}{- \frac{y^{2+\gamma}}{A(b-x)}} \\
 \notag &=y^{2-\delta} \frac{\Gamma(s+1)}{\Gamma(\nu+1)} (A(b-x))^{s-\nu} e^{- \frac{y^{2+\gamma}}{A(b-x)}}\pFq{1}{1}{1+s}{\nu+1}{ \frac{y^{2+\gamma}}{A(b-x)}}.
\end{align*}
This expression may finally be inverted thanks to \cite[p.364 Formula (24)]{Erd}, with $\xi>0$ :
$$\Gamma(s+1) \pFq{1}{1}{1+s}{\nu+1}{\xi} = \Gamma(\nu+1) \int_0^{+\infty} z^{s}\, (\xi z)^{-\frac{\nu}{2}} e^{-z} I_\nu(2\sqrt{\xi z}) dz.$$
Next, when $y<0$, we must have $\sigma_0 \leq T_b$. Therefore, 
\begin{align*}
\notag \E_{(x,y)}\left[Y_{T_{ab}}^{s(2+\gamma)}\Un_{\{Y_{T_{ab}}>0\}}\right]&= \E_{(x,y)}\left[Y_{T_{ab}}^{s(2+\gamma)}\Un_{\{Y_{T_{ab}}>0, \, \sigma_0< T_{ab}\}}\right] \\
&=\E_{(x,y)}\left[ \E_{(X_{\sigma_0},0)}\left[Y_{T_{ab}}^{s(2+\gamma)}\Un_{\{Y_{T_{ab}}>0\}}\right] \Un_{\{X_{\sigma_0}>a\}}\right]
\end{align*}
and we conclude again by using Theorem \ref{theo:B} and Lemma \ref{lem:Xs0}.
\noindent
 \qed

\subsection{Proof of Corollary \ref{cor:D}}

Assume first that $y=0$. Then, from Lemma \ref{lem:lawXzetaab} and (\ref{eq:Gauss}), we have 
\begin{align*}
\Pb_{(x,0)}\left(T_b< T_a\right) &= \Pb_{(x,0)}\left(X_{\zeta_{ab}}-b\geq0 \right)\\
&=\frac{\sin(\pi \beta) }{\pi} (x-a)^\alpha (b-x)^\beta\int_0^{+\infty}    \frac{ z^{-\beta}(b-a +z)^{-\alpha}}{b-x+z} dz\\
&= \frac{\sin(\pi \beta) }{\pi} \frac{\Gamma(1-\beta)\Gamma\left(\nu\right)}{\Gamma(1+\alpha)} \left(\frac{x-a}{b-a}\right)^{\alpha} 
 \pFq{2}{1}{\alpha\quad 1-\beta}{1+\alpha}{\frac{x-a}{b-a}}
\end{align*}
and the expression given in Corollary  \ref{cor:D} is a consequence  of the compensation formula for the Gamma function. When $y\neq 0$, the result follows by noticing that 
$$\Pb_{(x,y)}\left(T_b< T_a\right) = \Pb_{(x,y)}(Y_{T_{ab}}>0)$$
and taking $s=0$ in the previous Mellin transforms of Section \ref{sec:3.3}.
\qed

\bigskip

\noindent
{\bf Acknowledgements.} We wish to thank T. Simon for his comments on an earlier version of this manuscript.

%


\begin{thebibliography}{10}


\bibitem{AlAy}
L. Alili and A. Aylwin. On the semi-group of a scaled skew Bessel process. {\em Statist. Probab. Lett.} {\bf 145}, 96--102, 2019.

\bibitem{AuSi} 
F.~Aurzada and T. Simon. Persistence probabilities and exponents.  {\em L\'evy Matters V}, Lecture Notes in Math.  {\bf 2149}, Springer, 183-224,   2015.

\bibitem{Ble}
S. Blei. On symmetric and skew Bessel processes. {\em Stochastic Process. Appl.} {\bf 122} (9), 3262-3287, 2012.

\bibitem{BMS}
A.~J.~Bray, S.~N.~Majumdar and G.~Schehr. Persistence and first-passage properties in non-equilibrium systems. {\em Adv. Physics} {\bf 62} (3), 225-361, 2013.

\bibitem{Cet}
U. \c{C}etin. On certain integral functionals of squared Bessel processes. {\em Stochastics} {\bf 87} (6) 1033-1060, 2015.

\bibitem{Erd}
A.~Erd\'elyi, W.~Magnus, F.~Oberhettinger and F. G.~Tricomi.  Tables of integral transforms. Vol. II.  {\em McGraw-Hill Book Company, Inc., New York-Toronto-London}, 1954.

\bibitem{FGD}
P. Flajolet, X. Gourdon and  P. Dumas. Mellin transforms and asymptotics: harmonic sums. {\em Theoret. Comput. Sci.} {\bf 144} (1-2), 3-58, 1995.

\bibitem{GoYo}
A. G\"oing-Jaeschke and M. Yor. A survey and some generalizations of Bessel processes. {\em Bernoulli}  {\bf 9}, 313-349,  2003.

\bibitem{GrRy}
I.S. Gradshteyn and I.M. Ryzhik. Table of integrals, series, and products. {\em Elsevier/Academic Press, Amsterdam}, 2007.

\bibitem{GJW}
P.~Groeneboom, G.~Jongbloed, and J.~A. Wellner. Integrated {B}rownian motion, conditioned to be positive. {\em Ann. Probab.}, {\bf 27} (3), 1283-1303, 1999.

\bibitem{IsKo}Y.~Isozaki and S.~Kotani. Asymptotic estimates for the first hitting time of fluctuating additive functionals of Brownian motion. {\em S\'emininaire de ProbababilitŽs} XXXIV,  Lecture Notes in Math. {\bf 1729}, 374-387, 2000.
 
 
 \bibitem{JanArea}
S.~Janson. Brownian excursion area, Wright's constants in graph enumeration, and other Brownian areas. {\em Probab. Surv.} {\bf 4},  80-145, 2007.  
 

\bibitem{LacT0}
A.~Lachal. Sur le premier instant de passage de l'int\'egrale du mouvement brownien. {\em Ann. Inst. H. Poincar\'e, Probab. Stat.} {\bf 27} (3), 385-405, 1991.

\bibitem{LacTn}
A. Lachal. Les temps de passage successifs de l'int\'egrale du mouvement brownien. {\em Ann. Inst. H. Poincar\'e Probab. Stat.} {\bf  33} (1), 1-36, 1997.

\bibitem{LacExit1}
A.~Lachal.  First exit time from a bounded interval for a certain class of additive functionals of Brownian motion. {\em J. Theoret. Probab.} {\bf 13} (3), 733-775, 2000. 

\bibitem{LacExit2}
A.~Lachal. Some explicit distributions related to the first exit time from a bounded interval for certain functionals of Brownian motion. {\em J. Theoret. Probab.}{\bf 19} (4), 757-771, 2006.

\bibitem{Lej}
A. Lejay.  On the constructions of the skew Brownian motion. {\em Probab. Surv.} {\bf 3}, 413-466,  2006.

\bibitem{Pro}
C.~Profeta. Some limiting laws associated with the integrated Brownian motion. {\em ESAIM Probab. Statist.} {\bf  19}, 148-171, 2015.

\bibitem{PSPers}
C. Profeta and T. Simon. Persistence of integrated stable processes. {\em Probab. Theory Relat. Fields.} {\bf 162} (3),  463-485, 2015.

\bibitem{PSAlea}
C. Profeta and T. Simon. Windings of the stable Kolmogorov process. {\em  ALEA} {\bf  XII},  115-127, 2015.

\bibitem{Yor}
M. Yor. Exponential functionals of Brownian motion and related processes. {\em Springer Finance, Springer-Verlag}, Berlin, 2001.
\end{thebibliography}
\end{document}